\documentclass[]{article}

\usepackage[utf8]{inputenc}
\usepackage[T1]{fontenc}
\usepackage[english]{babel}
\usepackage{setspace}
\usepackage{pstricks}
\usepackage{pst-node}
\usepackage{amsmath}
\usepackage{amssymb}
\usepackage{graphicx}
\usepackage{fullpage}
\usepackage[hypertexnames=false, pdftex]{hyperref}
\hypersetup{ colorlinks = true, linkcolor = black, urlcolor = blue, citecolor = blue }
\usepackage{tikz}
\usepackage{mathtools}
\usepackage[text={15cm,24cm}]{geometry}
\usepackage{enumerate}
\usepackage[justification=centering]{caption}
\usepackage{subcaption}
\usepackage[linesnumbered,ruled]{algorithm2e}
\usepackage{empheq}
\usepackage{etoolbox}
\usepackage{tikz}
\usepackage{array,colortbl,xcolor}
\usepackage{makecell}
\usetikzlibrary{calc,shadings}
\usepackage{pgfplots}
\usepackage{verbatim}
\usetikzlibrary{arrows,shapes}


\newenvironment{proof}{{\bf Proof}:\ }%
{~\ \hfill $\Box$\vspace{0,5cm}}

{~\ \hfill$\Diamond$\vspace{0,5cm}}
\numberwithin{equation}{section}
\newcolumntype{?}{!{\vrule width 2pt}}
\newtheorem{prop}{Property}[section]
\newtheorem{theorem}{Theorem}[section]
\newtheorem{rmk}{Remark}[section]

\newtheorem{coro}[theorem]{Corollary}

\newenvironment{subequations*}{%
	\setcounter{parentequation}{\value{equation}}%
	\setcounter{equation}{0}%
	\ignorespaces
}{%
\setcounter{equation}{\value{parentequation}}%
\ignorespacesafterend
}

\def\addlegendimage{\csname pgfplots@addlegendimage\endcsname}
\title{Robust capacitated trees and networks with uniform demands \footnote{This work was partially supported by the PGMO Programme Gaspard Monge pour l'optimisation et la recherche opérationnelle de la Fondation Mathématique Jacques Hadamard.}}

\author{Cédric Bentz $^{(1)}$, Marie-Christine Costa $^{(2)}$, Pierre-Louis Poirion $^{(3)}$, Thomas Ridremont $^{(4)}$ \\
	{\scriptsize $^{(1)}$ CEDRIC, CNAM, 292 rue Saint-Martin 75003, Paris, France}\\
	{\scriptsize $^{(2)}$ ENSTA ParisTech (and CEDRIC-CNAM) 828, Boulevard des Maréchaux 91762 Palaiseau Cedex}\\
	{\scriptsize $^{(3)}$ Huawei (and CEDRIC-CNAM),
		France}\\
	{\scriptsize $^{(4)}$ CEDRIC, CNAM (and ENSTA ParisTech), 292 rue Saint-Martin 75003, Paris, France}}

\begin{document}
	\pgfdeclarelayer{background}
	\pgfsetlayers{background,main}
\parindent=0cm
\maketitle

\begin{abstract}
We are interested in the design of robust (or resilient) capacitated rooted Steiner networks in case of terminals with uniform demands. Formally, we are given a graph, capacity and cost functions on the edges, a root, a subset of nodes called \emph{terminals}, and a bound $k$ on the number of edge failures. We first study the problem where $k=1$ and the network that we want to design must be a tree covering the root and the terminals: we give complexity results and propose models to optimize both the cost of the tree and the number of terminals disconnected from the root in the worst case of an edge failure, while respecting the capacity constraints on the edges. Second, we consider the problem of computing a minimum-cost survivable network, i.e., a network that covers the root and terminals even after the removal of any $k$ edges, while still respecting the capacity constraints on the edges. We also consider the possibility of protecting a given number of edges. We propose three different formulations: a cut-set based formulation, a flow based one, and a bilevel one (with an attacker and a defender). We propose algorithms to solve each formulation and compare their efficiency.
\end{abstract}

\section{Introduction}
\label{sec:intro}

Nowadays, the design of networks is crucial in many fields such as transport, telecommunications or energy. Here, we are interested in the design of robust (or resilient) networks, for certain notions of robustness which will be described later. Formally, we are given a graph $G$, capacity and cost functions on the edges, a root, and a subset of nodes called \emph{terminals}, and we want to select nodes and edges of $G$ to build a minimum-cost network linking the root to the terminals, while respecting the capacity constraints on the edges. We assume that some edges can break down, and that there is a bound $k$ on the number of edge failures.  This paper deals with the special case where the demand is identical for each terminal, i.e., the flow from the root to each terminal is a constant, and hence can be set to 1 without loss of generality. The uncertainty considered here concerns the breakdowns, and we aim to protect the network to be built against the worst case.\\

We first study the case where the network we want to build is an arborescence (rooted tree). The problem corresponds to the capacitated Steiner tree (or arborescence) problem which has been studied  for instance in \cite{Bentz2016EdgeCapacitated, du2013advances,goemans1993catalog,hwang1992steiner}. We assume that one arc can break down, and we aim at generating a robust arborescence, i.e., an arborescence that minimizes the number of terminals disconnected from the root in the worst case of a breakdown. This can model, in particular, the problem of wiring networks in windfarms, in order to route the energy produced by the wind turbines to the sub-station, while respecting some technical constraints (such as cable capacities, non-splitting constraints, etc.; see \cite{hertz2012optimizing,pillai2015offshore}).

Then, we study the so-called Capacitated Rooted $k$-Edge Connected Steiner Network problem 
:  we aim to design a minimum-cost network in which, after the failure of any $k$ arcs, we can still route one unit of flow from the root to each terminal. This problem is similar to the Survivable Network Design problem 
(see for instance \cite{grotschel1995design}). However, on the one hand, the authors of \cite{grotschel1995design} 
do not take into account the arc-capacities. On the other hand, 
in the latter problem there is a requirement $r_{st}$ for each pair of vertices $(s,t)$; this means that there must be a path from vertex $s$ to $t$ after any $(r_{st}-1)$ arc deletions (in our problem, the edge-survivability requirements are either ($k+1$) or 0). A survey and other work on this problem are available in \cite{goemans1993survivable,kerivin2005design}. In \cite{botton2013benders}, a method based on Benders decomposition is proposed for a problem with hop-constraints.  In \cite{bienstock2000strong,kerivin2002design,rajan2004directed}, the authors take capacities into account, but they allocate it whereas, in our problem, the capacities are fixed. Studies on multicommodity versions of the problem are also available in \cite{dahl1998cutting, stoer1994polyhedral}. Polyhedral studies have also been conducted on problems corresponding to the uncapacitated \cite{baiou1997steiner} or unrooted \cite{biha2000steiner} version of our problem. Eventually, we introduce the Capacitated Protected Rooted $k$-Edge Connected Steiner Network Problem, 
where a subset of arcs may be protected and thus cannot break down.\\

We denote the given underlying graph by $G=(V,E,c,u)$, where $c$ and $u$ are respectively the cost and the capacity functions on the set of edges $E$. We are also given a set $T \subseteq V$ of terminals and a root $r \in V \setminus T$.  The given graph can be directed or undirected (this will be specified in the following), but the Steiner tree or network to be built is always directed from the root towards the terminals. Since we consider a uniform demand at the terminals, the capacity $u_{ij}$, defined as the maximum amount of flow that can be routed from the root to the terminals through the arc $(i,j)$, can also be defined as the maximum number of terminals that can be connected to the root through $(i,j)$. Hence, we can assume without loss of generality that $u(e)$ is a positive integer for each $e \in E$. Given a digraph $G=(V,A)$, we will refer to $\Gamma_{G}^+(v)$ and $\Gamma_{G}^-(v)$ as the set of successors and predecessors of a vertex $v \in V$ in $G$, respectively. In the case where $G$ is an undirected graph, we will refer to $\Gamma_{G}(v)$ as the neighbors of $v$ in $G$.\\

In Section \ref{sec:CRStA}, we study the problem of finding a Steiner or spanning arborescence taking into account both the cost and the number of terminals disconnected from the root in the worst case of an edge breakdown. We provide a complexity result, that proves that deciding whether there exists a spanning arborescence respecting the capacity constraints is an NP-Complete problem; that corresponds to the special case where there is a demand equal to 1 at each node (except the root). We also propose different formulations, considering the criteria either as objectives or as constraints with given bounds (on the costs and/or the maximum number of disconnected terminals). Then, we compare these formulations by testing them on real windfarm data.

In Section \ref{sec:CRkECSN}, we study the capacitated rooted $k$-edge connected Steiner network problem, which amounts to searching for a robust network, i.e., a network that, in the worst case of $k$ arcs failure, can still route one unit of flow from the root to each terminal while respecting the capacity constraints. We give two formulations based on cut-sets and flows respectively, as well as a third one, which is actually a bilevel program whose second level is a min-max problem, with an attacker and a defender. Then, we consider the case where a set of arcs can be protected (and thus cannot break down), and show how to adapt these three formulations in this case. We also propose methods based both on integer linear programming and constraints generation, as well as valid inequalities, to solve each of the formulations we obtained.

Finally, in Section \ref{sec:results}, we compare the efficiency of the methods proposed in the previous section, by testing them on a large set of randomly generated data, before concluding.

\section{Robust arborescences}
\label{sec:CRStA}

In this section, we focus on finding a robust Steiner or spanning arborescence covering the root and the terminals of $G$. Here, the robustness consists in finding a solution which minimizes the number of terminals disconnected from the root in the worst case of an arc failure.

This setting arises in some windfarm cabling problems (see Section \ref{sec:intro}), when technical constraints impose that all electrical flows arriving at any device except the substation must leave it through one and only one cable: an inclusion-wise minimal sub-network of $G$ respecting those constraints then corresponds to a Steiner anti-arborescence. The wind turbines are identical, and the wind is assumed to blow uniformly, so we can assume that each turbine produces one unit of energy. Then, $A$ is the set of all possible cable locations, $r$ is the sub-station collecting the energy and delivering it to the electric distribution network, $T$ represents the set of nodes where a windturbine lies and $V \setminus (\{r\} \cup T)$ is  the set of Steiner nodes, corresponding to possible junction nodes between cables. In that case, the flow is routed from the vertices of $T$ to $r$, and we search for an anti-arborescence. However, the problem is easily seen to be equivalent to the Steiner arborescence problem, by reversing the flow circulation in the solution.

We begin by defining the problem and giving some complexity results, and then we propose mathematical formulations which are tested on real windfarm instances.

\subsection{Definition of problems and complexity results.}

We assume in this section that the graph $G=(V,E)$ is undirected and, when considering a subgraph $G'=(V',A')$ of $G$ to which we give an orientation, we write $V' \subseteq V$ and $A' \subseteq E$ (arcs of $G'$ correspond to edges of $G$).  We define the robust problem without capacity constraint as follows:\\

 \textbf{Robust Steiner Arborescence problem (RStA)}

 \textit{INSTANCE: } A connected graph $G=(V,E,r,T)$ with  $r \in V$ and $T \subseteq V \setminus \{r\}$.

 \textit{PROBLEM: } Find an arborescence $S = (V_S,A_S)$ such that $V_S \subseteq V$, $A_S \subseteq E$ and $T \subset V_S$, which is rooted at $r$ and minimizes the number of terminals disconnected from $r$ when an arc $a$ is removed from $ A_S$, in the worst case.\\

 We also consider the spanning version of the problem (i.e., $T = V \setminus \{r\}$). In this case, the problem is to minimize the number of vertices in the largest (regarding the number of vertices) subarborescence not containing $r$. We define it as follows:\\

 \textbf{Robust Spanning Arborescence problem (RSpA)}

 \textit{INSTANCE: }  A connected graph $G=(V,E,r)$ with $r \in V$.

 \textit{PROBLEM: } Find a spanning arborescence $S$ of $G$, rooted at $r$, which minimizes the size of the largest subarborescence of $S$ not containing $r$.\\

  Obviously,  the largest subarborescence not containing $r$ is rooted at a vertex $v \in \Gamma_G(r)$, and the worst case is the failure of an arc incident to the root. We have the following property:

  \begin{prop}\label{prop:RSpTGammaR}
  	
  	a) There is an optimal solution $S^*=(V,A^*)$ of RSpA  containing $(r,v)$ for all $v \in \Gamma_{G}(r)$ $(\Gamma_{G}(r) =  \Gamma^+_{S^*}(r))$.
  	
  	b) There is an optimal solution $S^*=(V^*,A^*)$ of RStA containing $(r,v)$  for all $v \in V^*_S \cap \Gamma_{G}(r)$.
  \end{prop}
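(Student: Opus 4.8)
The plan is to prove both parts by the same exchange argument: start from an arbitrary optimal solution and, as long as some vertex $v\in V^*_S\cap\Gamma_G(r)$ is reached in the current solution by an arc other than $(r,v)$, reroute $v$ directly to the root; then show this never increases the objective, and that the process terminates.

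Consider part b) first. Let $S=(V_S,A_S)$ be an optimal solution of RStA, and suppose $(w,v)\in A_S$ with $w\neq r$ and $v\in V_S\cap\Gamma_G(r)$. Let $r=p_0,p_1,\dots,p_m=v$ (so $p_{m-1}=w$ and $m\ge 2$) be the $r$--$v$ path in $S$, and let $T_v$ be the subtree of $S$ rooted at $v$. Define $S'$ by deleting $(w,v)$ and adding the arc $(r,v)$, which exists in $G$ because $v\in\Gamma_G(r)$. First I would check feasibility of $S'$: it has the same vertex set as $S$ (hence still covers $T$), every non-root vertex still has a unique incoming arc, and every vertex is still reachable from $r$ --- the vertices of $T_v$ through the new arc $(r,v)$, and every other vertex along its old $r$-path, which used neither $(w,v)$ nor $(r,v)$. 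So $S'$ is again an arborescence rooted at $r$ covering $T$.

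The core of the argument is comparing objectives. Deleting an arc $a$ from an arborescence disconnects exactly the subtree $S_a$ hanging below $a$, so the value of a solution is $\max_{a}|T\cap V(S_a)|$ for RStA, and $\max_{a}|V(S_a)|$ for RSpA (which, by the observation preceding the statement, is attained at an arc leaving $r$, though we will not need this). Passing from $S$ to $S'$: the rooted subtree $T_v$ is untouched, so $S'_{(r,v)}=T_v$, which is precisely the subtree that hung below the deleted arc $(w,v)$; for each arc $(p_{i-1},p_i)$ with $1\le i\le m-1$ the subtree below it loses exactly $V(T_v)$ and thus shrinks; and for every other arc the subtree below it is unchanged, because an arc whose subtree contains $v$ must lie on the $r$--$v$ path (each non-root vertex has a unique predecessor), while an arc lying inside $T_v$ keeps its subtree. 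Hence the multiset of ``subtrees below an arc'' of $S'$ is obtained from that of $S$ by replacing some members by subsets of themselves, the member $V(T_v)$ being preserved by the exchange of $(w,v)$ for $(r,v)$. Since both objectives are the maximum of a set-monotone quantity, neither increases, so $S'$ is again optimal. Each exchange strictly increases $|\Gamma^+_S(r)|$, which is bounded by $|\Gamma_G(r)|$, so after finitely many steps we reach an optimal solution in which every vertex of $V_S\cap\Gamma_G(r)$ is a child of $r$; this is b). Part a) is the same argument applied to the objective $\max_a|V(S_a)|$ with $T=V\setminus\{r\}$; there $V^*_S=V\supseteq\Gamma_G(r)$, so the resulting solution satisfies $\Gamma^+_{S^*}(r)=\Gamma_G(r)$.

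The step I expect to need the most care is the bookkeeping in the previous paragraph: establishing that the subtrees below arcs change exactly on the old $r$--$v$ path and nowhere else. This reduces to the basic property of arborescences that each non-root vertex has a unique incoming arc, whence any arc whose subtree-below contains $v$ sits on the path from $r$ to $v$. The remaining points --- feasibility of $S'$, termination, and the trivial edge case $A_S=\emptyset$ (objective $0$) --- are routine.
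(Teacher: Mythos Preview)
Your proof is correct and follows essentially the same exchange argument as the paper: remove the arc $(w,v)$ entering $v$ and replace it by $(r,v)$, observing that this cannot increase the objective. Your version is simply more thorough in verifying feasibility, tracking exactly which subtrees change, and arguing termination, whereas the paper compresses all of this into the single remark that one subarborescence is replaced by two smaller ones.
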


  \begin{proof}
  	Let $S=(V,A_S)$ be an optimal solution  of RSpA such that there is $v \in \Gamma_{G}(r)$ with $(r,v) \notin A_S$, and let $w$ be the predecessor of $v$ in the path from $r$ to $v$ in $S$. If we remove $(w,v)$ from $A_S$ and add $(r,v)$, we obtain a new spanning arborescence  at least as good as $S$, since we have replaced a subarborescence by two subarborescences of smaller sizes. Doing so for each $v \in \Gamma_{G}(r)$ with $(r,v) \notin A_S$ yields a solution $S^*$ verifying the property.
  	
  	The proof is similar for RStA, by replacing $\Gamma_{G}(r)$ by $V^*_S \cap \Gamma_{G}(r)$: if we remove $(w,v)$ from $A_S$ and add $(r,v)$, we obtain a new Steiner arborescence  at least as good as $S$, since we have replaced a subarborescence by two subarborescences spanning at most the same number of terminals.
  \end{proof}

  Notice that the property does not hold if we have capacity constraints, because the capacity of $(r,v)$ can be smaller than the one of $(w,v)$ in the proof above. Let us now introduce the feasibility problem associated with \textbf{RSpA}:\\

  \textbf{Robust Spanning Arborescence Feasibility problem (RSpAF)}

  \textit{INSTANCE: } A connected graph $G=(V,E,r)$ with $r \in V$ and an integer $\beta$ with $1 \leq \beta \leq |V| - 1$.

  \textit{QUESTION: } Is there a spanning arborescence $S=(V_S,A_S)$ of $G$, rooted at $r$, such that the size of any subarborescence of $S$ not containing $r$ is at most $\beta$?

  \begin{theorem}\label{theorem:ComplexityRSpAF}
  	\textbf{RSpAF} is NP-Complete.
  \end{theorem}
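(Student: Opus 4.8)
The plan is to show membership in NP and then prove NP-hardness by a reduction from a suitable well-known NP-complete problem. Membership is immediate: given a candidate spanning arborescence $S$ rooted at $r$, one can in polynomial time check that it is indeed a spanning arborescence of $G$ and, for each $v \in \Gamma_G(r)$, compute the size of the subarborescence of $S$ hanging from $v$ and verify it is at most $\beta$ (by Property \ref{prop:RSpTGammaR}a, or simply by a linear-time traversal, it suffices to look at the children of $r$). So the work is in the hardness reduction.

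First I would pick the source problem. The structure of RSpAF — we must partition $V \setminus \{r\}$ into groups, each group being the vertex set of one subarborescence hanging off a distinct neighbor of $r$, each of size at most $\beta$, and each group having to induce a connected subgraph that can be spanned by an arborescence rooted at the neighbor of $r$ it attaches to — strongly suggests a reduction from \textsc{3-Partition} (which is strongly NP-complete, hence we may take $\beta$ polynomially bounded) or from a \textsc{Bin Packing}/\textsc{Balanced Partition}-type problem, possibly combined with a gadget enforcing connectivity. I would most likely reduce from \textsc{3-Partition}: given $3m$ integers summing to $mB$, build a graph with a root $r$ adjacent to $m$ ``bucket'' vertices, and attach to the construction, for each integer $a_i$, a gadget (e.g.\ a path or small tree on $a_i$ vertices, or $a_i$ pendant vertices hung on a selector vertex) that is freely attachable to any bucket vertex but cannot be split between two buckets. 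Setting $\beta = B + 1$ (accounting for the bucket vertex itself) should force each bucket to receive exactly a set of integers summing to $B$, i.e.\ a valid 3-partition. One must design the gadgets and the ambient edges so that (i) any valid 3-partition yields a feasible arborescence, and (ii) conversely any feasible arborescence, after the normalization of Property \ref{prop:RSpTGammaR}a placing all neighbors of $r$ directly under $r$, induces a valid 3-partition — in particular no gadget's vertices can be spread across two buckets without exceeding $\beta$ or violating connectivity.

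The main obstacle I anticipate is controlling the combinatorics of how subarborescences can form: in a general graph a subarborescence hanging from a neighbor $v$ of $r$ may absorb vertices through many different paths, so I must ensure the gadget graph is ``rigid'' enough that the only feasible groupings correspond exactly to assignments of whole items to buckets. Concretely this means: the non-root part of the graph should be designed so that each item-gadget is a connected block attached only to the bucket vertices (and the bucket vertices are only adjacent to $r$ and to gadget attachment points, not to each other), so that in any spanning arborescence rooted at $r$ each item-gadget lies entirely within one bucket's subtree. A clean way to do this is to make each item $a_i$ correspond to a star: a center $s_i$ adjacent to all $m$ bucket vertices and to $a_i - 1$ private leaves; then $s_i$ together with its leaves must all fall under one bucket. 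Verifying that this forces exactly the 3-partition structure, and that the size bound $\beta = B+1$ is tight, is the crux; once the gadget rigidity is established, the equivalence ``feasible arborescence $\iff$ valid 3-partition'' and the polynomial-size / polynomial-time claims follow routinely.
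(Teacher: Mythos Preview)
Your proposal is correct and follows essentially the same approach as the paper: a reduction from \textsc{3-Partition} with a root $r$ adjacent to $m$ bucket vertices, each item $d_i$ encoded by a star whose center is adjacent to all buckets and carries $d_i-1$ private leaves, and threshold $\beta=B+1$, using Property~\ref{prop:RSpTGammaR}a to normalize and strong NP-completeness to keep the construction polynomial. The only detail you leave implicit is that the standard $B/4<d_i<B/2$ assumption is what forces exactly three items per bucket once each bucket's subtree has size exactly $B+1$; otherwise your argument matches the paper's almost verbatim.
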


  \begin{proof}
  	We introduce the 3-Partition problem \cite{garey1979computers} in order to transform an instance of this problem into a \textbf{RSpAF} one.\\
  	
  	\textbf{3-Partition problem}
  	
  	\textit{INSTANCE: } A finite set $D$ of $3m$ positive integers $d_{i}$, $i=1,..,3m$, and a positive integer $B$ such that $\sum_{i=1,...,3m}d_i = mB$ and $B/4 < d_i < B/2$ $\forall i=1,...,3m$.
  	
  	\textit{QUESTION: } Can $D$ be partitioned into $m$ disjoints subsets $M_{1}, M_{2}, ...,M_{m}$ of three elements such that  the sum of the numbers in each subset is equal to $B$?\\

  	To obtain an instance of \textbf{RSpAF} 
  	from an instance of 3-Partition, we set $\beta=B+1$ and we construct the following graph $G=(V,E)$: we define a root $r$ and $m$ vertices $v_j$ with an edge $[r,v_j]$ for $j=1,...,m$, each vertex $v_j$ corresponding to a set $M_j$. We add $3m$  vertices $w_i$ and the edges $[v_j,w_i]$ for all $j=1,..,m$ and all $i=1,..,3m$, each vertex $w_i$ corresponding to the element $d_i$ of $D$  (the subgraph induced by the vertices $v_j$ and $w_i$ is complete bipartite). Finally, for each $i=1,..,3m$, we add $d_i - 1$ vertices adjacent to $w_i$ : the subgraph induced by those vertices and the vertices $w_i$ is made of $3m$ stars. See  Figure \ref{fig:3PartitionGraph} for a graph representation of a 3-Partition instance with $m=2$, $B=11$ and $D=\{5,3,4,3,4,3\}$. Notice that $\vert V \vert = 1+m+mB$.
  	\\
  	
  	Solving \textbf{RSpAF} on $G$ with $\beta = B + 1$ amounts to finding an arborescence where the size of the subarborescence rooted at each $v_{j}$ is smaller than or equal to $B + 1$.  If there is a solution to \textbf{RSpAF} on $G$, then, from the proof of  Property \ref{prop:RSpTGammaR}, there is a solution $S$ such that $(r,v_j) \in S$  $\forall j=1,...,m$, and each $w_i$ is connected to exactly one $v_j$, otherwise there is a cycle. Given a vertex $v \in S$, let $S(v)$ be the subarborescence of $S$ rooted at $v$: $\forall j=1,...,m$,  we have $|S(v_j)| \leq B + 1$  and $\sum_{j=1,..,m} |S(v_j)| = |V \setminus \{r\}| = mB + m$. Thus,  $\forall j=1,...,m$, $|S(v_j)| = B +1$ and $S(v_j)$ contains $v_j$ and several vertices $w_i$, each having $d_i-1$ successors in $S$. Finally, the constraints $B/4 < d_i < B/2$ imply that, $\forall j=1,...,m$, $v_{j}$ is connected to exactly 3 vertices $w_{i}$ denoted in the following by $w_{j_1}$, $w_{j_2}$ and $w_{j_3}$, and such that $\vert S(w_{j_1}) \vert + \vert S(w_{j_2}) \vert + \vert S({w_{j_3}}) \vert = |S({v_j})|-1 = B$.\\
  	
  	Then, it is easy to obtain a solution to the 3-Partition instance. For each $j=1,..,m$, we set $M_j=\{\vert S(w_{j_1}) \vert,\vert S(w_{j_2}) \vert,\vert S(w_{j_3}) \vert\} = \{d_{j_1},d_{j_2},d_{j_3}\}$. We have $m$ disjoint sets, each of size $B$, which cover exactly $D$. For the instance given in Figure \ref{fig:3PartitionGraph}, a  solution to 3-Partition can be associated with the arborescence given in thick: $M_{1} = \{5,3,3\}$ and $M_{2} = \{4,4,3\}$.\\
  	
  	  Similarly,  from a solution to the 3-Partition instance, it is easy to obtain a solution $S$  to \textbf{RSpAF} for the associated graph $G$. 
  	  \\
  	
  	The 3-Partition problem is NP-Complete in the strong sense, meaning that it remains NP-Complete even if the integers in $D$ are bounded above by a polynomial in $m$. Thus, the reduction can be done in polynomial time and  \textbf{RSpAF}, which is clearly in NP, is NP-Complete.
  \end{proof}

   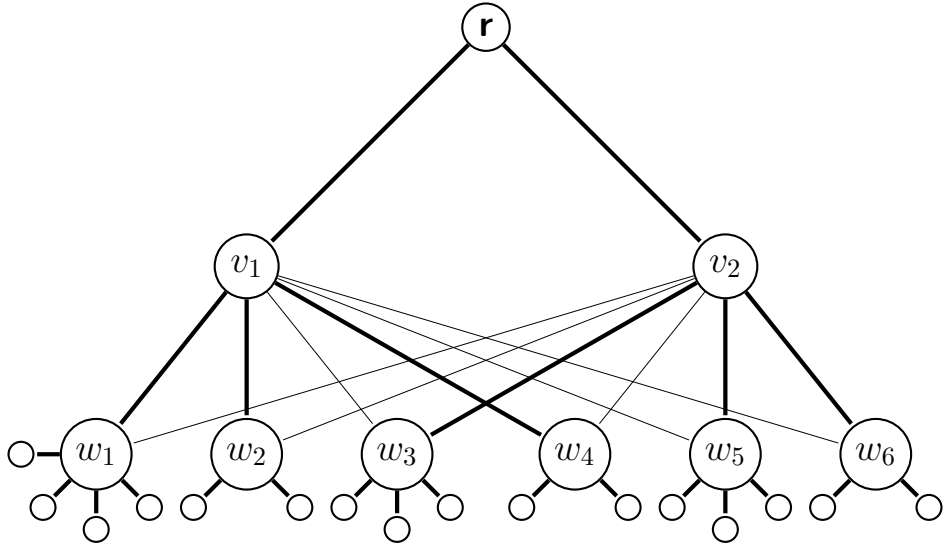
\begin{figure}
   	\centering
   	\begin{tikzpicture}[-,>=stealth,shorten >=1pt,auto,node distance=1.5cm,
   	thick]
   	\tikzstyle{gsm}=[circle,draw,font=\sffamily\Large\bfseries]
   	\node[gsm] (r) {r};
   	\node[gsm,below right of =r, node distance = 4.5cm] (v2) {$v_{2}$};
   	\node[gsm,below left of=r, node distance = 4.5cm] (v1) {$v_{1}$};
   	\node[gsm] (a2) [below of=v1, node distance = 2.5cm]{$w_{2}$};
   	\node[gsm] (a1) [left of=a2, node distance = 2cm]{$w_{1}$};
   	\node[gsm] (a3) [right of=a2, node distance = 2cm]{$w_{3}$};
   	\node[gsm] (a5) [below of=v2, node distance = 2.5cm]{$w_{5}$};
   	\node[gsm] (a4) [left of=a5, node distance = 2cm]{$w_{4}$};
   	\node[gsm] (a6) [right of=a5, node distance = 2cm]{$w_{6}$};
   	\node[gsm] (a1w1) [below left of=a1, node distance = 1cm] {};
   	\node[gsm] (a1w2) [left of=a1, node distance = 1cm] {};
   	\node[gsm] (a1w3) [below of=a1, node distance = 1cm] {};
   	\node[gsm] (a1w4) [below right of=a1, node distance = 1cm] {};
   	\node[gsm] (a2w1) [below left of=a2, node distance = 1cm] {};
   	\node[gsm] (a2w2) [below right of=a2, node distance = 1cm] {};
   	\node[gsm] (a3w1) [below left of=a3, node distance = 1cm] {};
   	\node[gsm] (a3w2) [below of=a3, node distance = 1cm] {};
   	\node[gsm] (a3w3) [below right of=a3, node distance = 1cm] {};
   	\node[gsm] (a4w1) [below left of=a4, node distance = 1cm] {};
   	\node[gsm] (a4w2) [below right of=a4, node distance = 1cm] {};
   	\node[gsm] (a5w1) [below left of=a5, node distance = 1cm] {};
   	\node[gsm] (a5w2) [below of=a5, node distance = 1cm] {};
   	\node[gsm] (a5w3) [below right of=a5, node distance = 1cm] {};
   	\node[gsm] (a6w1) [below left of=a6, node distance = 1cm] {};
   	\node[gsm] (a6w2) [below right of=a6, node distance = 1cm] {};

   	\path[every node/.style={font=\sffamily\small}]
   	(r) edge[ultra thick] node {} (v1)
   	edge[ultra thick] node {} (v2)
   	(v1) edge[ultra thick] node {} (a1)
   	edge[ultra thick] node {} (a2)
   	edge[ultra thin] node {} (a3)
   	edge[ultra thick] node {} (a4)
   	edge[ultra thin] node {} (a5)
   	edge[ultra thin] node {} (a6)
   	(v2) edge[ultra thin] node {} (a1)
   	edge[ultra thin] node {} (a2)
   	edge[ultra thick] node {} (a3)
   	edge[ultra thin] node {} (a4)
   	edge[ultra thick] node {} (a5)
   	edge[ultra thick] node {} (a6)
   	(a1) edge[ultra thick] node {} (a1w1)
   	edge[ultra thick] node {} (a1w2)
   	edge[ultra thick] node {} (a1w3)
   	edge[ultra thick] node {} (a1w4)
   	(a2) edge[ultra thick] node {} (a2w1)
   	edge[ultra thick] node {} (a2w2)
   	(a3) edge[ultra thick] node {} (a3w1)
   	edge[ultra thick] node {} (a3w2)
   	edge[ultra thick] node {} (a3w3)
   	(a4) edge[ultra thick] node {} (a4w1)
   	edge[ultra thick] node {} (a4w2)
   	(a5) edge[ultra thick] node {} (a5w1)
   	edge[ultra thick] node {} (a5w2)
   	edge[ultra thick] node {} (a5w3)
   	(a6) edge[ultra thick] node {} (a6w1)
   	edge[ultra thick] node {} (a6w2);
   	
   	\end{tikzpicture}
   	\centering
   	\captionof{figure}{Graph and \textbf{RSpAF} solution resulting from the 3-Partition instance in which $m=2$, $B=11$, $D=\{5,3,4,3,4,3\}$}
   	\label{fig:3PartitionGraph}
   \end{figure}

  \textbf{RSpAF} being NP-Complete, \textbf{RSpA} is NP-Hard, and so is \textbf{RCStA} because it is a generalization of \textbf{RSpA}. Let us now consider capacity constraints on the edges. 
\textbf{RSpAF} can be seen as a special case of the general capacitated spanning arborescence problem where the demand at each node is an integer (our demands are all equal to 1), and hence from Theorem \ref{theorem:ComplexityRSpAF} we obtain the following corollary:

  \begin{coro}
  	 Given a graph $G=(V,E,r,d,u)$ where $d$ represents the (integral) demands at each node and $u$ the capacities of the edges, the problem of deciding whether there exists a spanning arborescence of $G$, rooted at $r$ and respecting the capacities, is NP-Complete (even if $u$ is a uniform function and all demands are equal to 1).
  \end{coro}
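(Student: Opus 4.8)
The plan is to reduce from \textbf{RSpAF}, which is NP-Complete by Theorem \ref{theorem:ComplexityRSpAF}, using essentially the identity map on graphs. Given an instance $(G=(V,E,r),\beta)$ of \textbf{RSpAF} with $1\le\beta\le|V|-1$, I would produce the capacitated instance on the same graph $G$ by setting the demand $d(v)=1$ for every $v\in V\setminus\{r\}$ and the \emph{uniform} capacity $u(e)=\beta$ for every $e\in E$. This is done in linear time, and since $\beta$ is a positive integer in $[1,|V|-1]$ the resulting instance is exactly of the restricted form claimed in the corollary (uniform capacities, unit demands); note that ``uniform'' is meant within a single instance, the common value $\beta$ being instance-dependent.

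Next I would prove the equivalence of the two instances. The key remark is that in any spanning arborescence $S$ of $G$ rooted at $r$, routing one unit of flow from $r$ to each non-root vertex leaves no freedom: the flow carried by an arc $(i,j)\in A_S$ equals $\sum_{v\in S(v_j)}d(v)=|S(j)|$, where $S(j)$ is the subarborescence of $S$ rooted at $j$. Since every vertex $v\neq r$ is the head of exactly one arc of $A_S$, the arborescence $S$ respects the capacities $u\equiv\beta$ if and only if $|S(v)|\le\beta$ for all $v\neq r$. As observed in the text preceding \textbf{RSpAF}, the inclusion-maximal subarborescences of $S$ avoiding $r$ are precisely the $S(v)$ for $v$ a child of $r$, and $S(u)\subseteq S(v)$ whenever $v$ is an ancestor of $u$; hence the condition ``$|S(v)|\le\beta$ for all $v\neq r$'' is exactly the \textbf{RSpAF} requirement that every subarborescence of $S$ not containing $r$ have size at most $\beta$. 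Therefore $G$ admits a capacity-respecting spanning arborescence rooted at $r$ if and only if $(G,\beta)$ is a yes-instance of \textbf{RSpAF}.

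Finally I would note that the problem is in NP: a spanning arborescence is a certificate of polynomial size, and checking that it spans $V$, is rooted at $r$, and respects the capacities amounts to computing the subtree sizes $|S(v)|$ (equivalently, the induced flows), which takes linear time. Together with the reduction above this yields NP-Completeness, and the reduction already falls within the stated restriction, so the result holds even for uniform $u$ and all demands equal to $1$. There is no genuine obstacle here: the only step requiring care is the ``flow on an arc equals the size of the subarborescence below it'' identity together with the remark identifying the relevant subarborescences with the $S(v)$'s, which is precisely what lets the single uniform capacity $\beta$ encode the bound $\beta$ of \textbf{RSpAF}.
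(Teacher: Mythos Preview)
Your proposal is correct and follows essentially the same approach as the paper: the paper observes that \textbf{RSpAF} is the special case of the capacitated spanning arborescence feasibility problem obtained by taking unit demands and a uniform capacity equal to $\beta$, and then invokes Theorem~\ref{theorem:ComplexityRSpAF}. Your write-up makes this reduction explicit and supplies the verification (flow on an arc equals the size of the subarborescence below it, plus the NP-membership check) that the paper leaves implicit.
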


This extends the following result due to Papadimitriou \cite{papadimitriou1978}: given two positive values $C$ and $K$ and a graph $G=(V,E,r,c)$ where $c$ is a cost function on the edges, the problem of deciding whether there exists a spanning arborescence $S$ of $G$ rooted at $r$, such that each subarborescence of $S$ not containing $r$ contains at most $K$ vertices, and with total cost at most $C$, is NP-Complete.\\

The complexity results given in this section concern undirected graphs, and so the more general case of directed graphs too, since an undirected graph can be transformed into a directed one by replacing each edge by two opposite arcs. If we consider problems with capacity constraints, we give the same capacity to both opposite arcs: since we search for an arborescence, only one of them will appear in the solution.\\

 In the following, we study the more general following problem, which is hence also NP-hard:\\

 \textbf{Robust Capacitated Steiner Arborescence problem (RCStA)}

 \textit{INSTANCE: } A connected graph $G=(V,E,r,T,u)$ with  $r \in V$, $T \subseteq V \setminus \{r\}$ and $u$ a positive integer function on $E$.

 \textit{PROBLEM: } Find an arborescence $S = (V_S,A_S)$ with $V_S \subseteq V$ and  $A_S \subseteq E$, rooted at $r$ and spanning the terminals of $T$, which respects the arc capacities and minimizes the number of terminals disconnected from $r$ when an arc $a$ is removed from $ A_S$, in the worst case.\\

 \subsection{Mathematical formulations and tests}

 In this section we propose formulations for robust Steiner problems where the robustness is considered either as a constraint with the objective of minimizing the cost, or as an objective with or without constraints on the cost. Moreover, we study two kinds of robustness by considering  worst or average consequences of breakdowns.\\

 Let $G=(V,A,r,T,u,c)$ be a directed graph  with a root $r$, a set of terminals $T$, and capacity  and cost functions, respectively denoted by $u$  and $c$, on the arcs. As seen before, if $G$ is undirected, then we replace each edge by two opposite arcs with the same capacity and cost. To formulate the different problems, for each arc $(i,j) \in A$ we introduce the 0-1 variable $y_{ij}$ and the integer variable $x_{ij}$, where $y_{ij}$ equals 1 if and only if the arc $(i,j)$ is selected in the final solution, and $x_{ij}$ represents the number of terminals connected to the root through the arc $(i,j)$, or equivalently the number of terminals in the subarborescence rooted at $j$. We introduce the following polyhedron $\mathcal{T}$:

   \[
  \mathcal{T}=\left\{
    {x \in \mathbb{N}^{|A|}, \ y \in \{0,1\}^{|A|}} \left|
  \begin{array}{ll}
   \sum\limits_{(i,j) \in A} x_{ij} \ -  \sum\limits_{(j,k) \in A} x_{jk}\  =  \left\{
  \begin{array}{l l}
  |T| & \ \text{if } j = r\\
  -1 & \ \text{if } j \in T\\
  0 & \ \text{else}\\
  \end{array}
  \right.  & \forall j \in V\\
  \sum\limits_{(i,j) \in A} y_{ij} \quad \leq \quad 1 \quad & \forall j \in V \setminus \{r\}\\
  \ \ x_{ij} \quad \leq \quad u_{ij}y_{ij} \quad & \forall (i,j) \in A
  \end{array}
  \right.
  \right\}
  \]\\

In the following, we write $(x,y) \in \mathcal{T}$ when we consider a couple of variables verifying the contraints of $\mathcal{T}$. The first set of constraints in $\mathcal{T}$ ensures both the conservation of the number of terminals connected through each Steiner vertex $j \in V$ (flow conservation) and the connection of the root to all terminals. The second set of constraints ensures that the solution is an arborescence, i.e., that each vertex has at most one predecessor. Finally, the third set ensures that there is no flow on a non existing arc, and that the number of terminals connected through an arc $(i,j) \in A$ does not exceed its capacity. In the following, the relative gap between two costs will be denoted by $\Delta$. The well-known problem of the Capacitated Steiner Arborescence (\textbf{CStA}) can be formulated as follows \cite{bousba1991finding}:

  \[
  \textbf{CStA} \quad \left| \quad \min\limits_{(x,y) \in \mathcal{T}}  \quad  \sum\limits_{(i,j) \in A} c_{ij}y_{ij} \right.\\
  \]

  As explained previously, we evaluate the robustness by considering the number of terminals disconnected from the root in a worst scenario, that is, the maximum number of terminals connected through an arc incident to the root, which is equal to $ \max_{j \in \Gamma^+_G(r)} \ x_{rj}$. Let $R$ be a fixed bound on this value: we may disconnect at most $R$ terminals from the root by deleting an arc. We propose the following formulation for the Capacitated Steiner Arborescence  with bounded robustness ($\textbf{CStA}_{bounded-robust}$):

   \[
   \textbf{CStA}_{bounded-robust}\quad \left|
   \begin{array}{ll}
   \quad \min\limits_{(x,y) \in \mathcal{T}} & \quad  \sum\limits_{(i,j) \in A} c_{ij}y_{ij}\\
   \quad \text{ s.t. }  & \quad  \max\limits_{j \in \Gamma^+_G(r)} \ x_{rj} \ \leq R
   \end{array}
   \right. \\
   \]

  Let us now consider the robustness as an objective. Note that the default objective function is to minimize the cost of the solution. If a model uses another objective function, then its name will start by a given letter, e.g., $R$ if we want to optimize the worst-case robustness. We propose the following formulation for \textbf{RCStA}:

  \[
  \textbf{RCStA} \quad \left| \quad \min\limits_{(x,y) \in \mathcal{T}}  \quad  \max\limits_{j \in \Gamma^+_G(r)} \ x_{rj} \right.\\
  \]

  Since this formulation does not take the cost into account, we also propose a new formulation where we bound the cost of a solution by a given value $C$:

    \[
  \textbf{RCStA}_{bounded-cost} \quad \left|
  \begin{array}{ll}
  \quad \min\limits_{(x,y) \in \mathcal{T}} & \quad  \max\limits_{j \in \Gamma^+_G(r)} \ x_{rj}\\
  \quad \text{ s.t. }  & \quad\sum\limits_{(i,j) \in A} c_{ij}y_{ij} \ \leq \ C
  \end{array}
  \right. \\
  \]

 However, the previous models only consider the worst-case of a breakdown. It appears that it could also be interesting to "balance" the tree in order to reduce the loss due to an "average breakdown". To this end, we consider arc failures at each vertex and not only at the root, i.e., for each $i \in V$, we consider the worst case of a breakdown of an arc leaving $i$. This corresponds, for each $i \in V$, to the maximum number of terminals that cannot be reached from the root in case of a breakdown of an arc $(i,j)$, $j \in \Gamma^+_G(i)$, or equivalently to the maximum flow on an arc $(i,j)$, $j \in \Gamma^+_G(i)$. We define the "balanced robustness" as the sum of these values: $\sum_{i \in V} \max_{j \in \Gamma^+_G(i)} \ x_{ij}$. 

 We will use the letters $BR$ to refer to models where one wants to optimize the balanced robustness. We propose formulations similar to the previous ones for the Capacitated Steiner Arborescence with bounded balanced robustness, where we bound the balanced robustness of a solution by a given value $BR$:

   \[
   \textbf{CStA}_{bounded-balanced\_robust}\quad \left|
   \begin{array}{ll}
   \quad \min\limits_{(x,y) \in \mathcal{T}} & \quad  \sum\limits_{(i,j) \in A} c_{ij}y_{ij}\\
   \quad \text{ s.t. }  & \quad  \sum\limits_{i \in V} \max\limits_{j \in \Gamma^+_G(i)} \ x_{ij}\ \leq BR
   \end{array}
   \right. \\
   \]

The following formulation aims at computing the best balanced robustness:

 \[
  \textbf{BRCStA} \quad \left| \quad \min\limits_{(x,y) \in \mathcal{T}}  \quad \sum\limits_{i \in V} \max\limits_{j \in \Gamma^+_G(i)} \ x_{ij} \right.\\
  \]

Moreover, we can keep this latter objective while bounding both the worst-case robustness (by $R$) and the cost of the solution  (by $C$). We obtain:

 \[
   \textbf{BRCStA}_{bounded-robust-cost} \quad \left|
  \begin{array}{ll}
  \quad \min\limits_{(x,y) \in \mathcal{T}} & \quad \sum\limits_{i \in V} \max\limits_{j \in \Gamma^+_G(i)} \ x_{ij}\\
  \quad \text{ s.t. }  & \quad  \max\limits_{j \in \Gamma^+_G(r)} \ x_{rj} \ \leq R\\
  \quad   & \quad\sum\limits_{(i,j) \in A} c_{ij}y_{ij} \ \leq \ C
  \end{array}
  \right. \\
  \]

 We tested those formulations on real wind farm data sets. Even if the number of instances is small, the results are interesting to analyze, and we can compare the robustness, costs and structures of the solutions.
  Data parameters and results  are available respectively in Tables \ref{tab:Parameters} and \ref{subtab:RStPWithoutBound}.  Figure \ref{fig:resultsBRStP} allows to visually compare the arborescences obtained according to the different models for the fourth data set (the filled circles correspond to terminals).\\

   Figure \ref{subfig:StTP} gives an optimal (non robust) capacitated Steiner arborescence (optimal solution of \textbf{CStA}); let us denote its cost  by $C^*$. This arborescence cannot be qualified as robust since, in the worst case, all terminals can be disconnected by deleting the only arc incident to the root. Furthermore, the tree has a large depth, and hence the balanced robustness is not good either. This proves the importance of searching for a more robust solution. We consider first the worst case, \textbf{RCStA}, and we denote by $R^*$ the best robustness, i.e., the minimum value of the loss of terminals in the worst case of a single arc deletion. See Figure \ref{subfig:RCStA} for the associated solution on the test instance.
 Then, to obtain the minimum cost of a most robust solution, denoted by $C^*_{R^*}$, we solve $\textbf{CStA}_{bounded-robust}$ with $R=R^*$: notice that the constraint is saturated in any feasible solution. Then, $\Delta_{Crob}= (C_{R^*}^*-C^*)/C^*$   represents the "cost of robustness", i.e., the percentage of augmentation of the cost to get a robust solution.

In the same way, let $BR^*$ be the best balanced robustness (optimal value of \textbf{BRCStA}, not given in the table); see Figure \ref{subfig:BRCStA} for the associated solution on the test instance. The cost of a solution with the best balanced robustness, denoted by $C_{BR^*}^*$, is obtained by solving $\textbf{CStA}_{bounded-balanced\_robust}$ with $BR=BR^*$, and $\Delta_{Cbrob}= (C_{BR^*}^*-C^*)/C^*$   represents the "cost of balanced robustness",  i.e., the percentage of augmentation of the cost of a non robust arborescence to get a balanced robust solution.

We also study the behaviour of the robustness when we bound the cost to a value close to the one of an optimal non robust arborescence : $R_8$ (resp. $R_{12}$) corresponds to the optimal value of $\textbf{RCStA}_{bounded-cost}$ with a bound $C = 1.08 C^*$ (resp. $C = 1.12 C^*$).

We now analyse the results. The cost of robustness is quite variable on those instances (from 9 to 24\%) but remains rather low. On the contrary, we can see that the optimization of the average robustness is way more expensive (raise from 33\% to 64\% of the cost) because  it involves significantly more edges (see  Figure \ref{subfig:BRCStA}).\\

As we can see on Table \ref{subtab:RStPWithoutBound}, a cost augmentation of 8\% or 12\% on the optimal cost  can result in a solution with a good value of worst-case robustness for some instances: instances 2 and 4 present an excellent value of such robustness with only a cost augmentation of 8\%, while instances 1 and 3 have a rather good one with a cost augmentation of 12\%.\\

Finally, we compare the optimal robustness $R^*$ to the robustness of the balanced arborescence $S_b$ obtained by solving $\textbf{BRCStA}$, i.e., we compute in $S_b$ (see Figure \ref{subfig:BRCStA})  the maximum number of terminals which are disconnected after the deletion of an arc incident to the root. Let $R_{BR^*}$ be this number, shown in the last column of Table \ref{subtab:RStPWithoutBound}. For the test instances, the values of $R^*$ and $R_{BR^*}$ are the same, which means that $S_b$ is a good solution for both the worst and balanced robustness, but we have seen before that its cost is high. Indeed, for these instances, we see that forcing a solution with $R=R^*$ to be optimally balanced increases the cost by at least 33 \%. Nevertheless, there is no guarantee in the general case that the best balanced solution also has the best robustness in the worst case, although the arcs incident to the root are involved in the computation of the balanced robustness. \\

   \begin{table}[h!]
  	\begin{subtable}[t]{0.35\linewidth}
  		\centering
  		\begin{tabular}{|r|r|r|r|}
  			\hline
  			Set & |V| & |E| & |T| \\
  			\hline
  			1 & 91    & 220    & 42  \\
  			\hline
  			2 & 143    & 382    & 40  \\
  			\hline
  			3 & 220    & 510    & 88  \\
  			\hline
  			4 & 255    & 662    & 73  \\
  			\hline
  		\end{tabular}
  		\caption{Data parameters}
  		\label{tab:Parameters}
  	\end{subtable}
  	\hfill
    \begin{subtable}[t]{0.65\linewidth}
  		\centering
  		\begin{tabular}{|r|r|r|r|r|r|r|}
  			\hline
  			Set & $R^*$ & $R_{8}$ & $R_{12}$ & $\Delta_{Crob}$ & $\Delta_{Cbrob}$ & $R_{BR^*}$ \\
  			\hline
  			1 & 21 & 35 & 29 & 0.18 & 0.56 & 21 \\
  			\hline
  			2 & 20 & 21 & 20 & 0.09 & 0.64 & 20 \\
  			\hline
  			3 & 22 & 32 & 30 & 0.24 & 0.33 & 22 \\
  			\hline
  			4 & 37 & 41 & 38 & 0.19 & 0.37 & 37 \\
  			\hline
  		\end{tabular}
  		\caption{Results on robust arborescences}
  		\label{subtab:RStPWithoutBound}
  	\end{subtable}
  	\caption{Results on robust arborescences and data parameters}
  	\label{tab:resultsRStP}
  	
  \end{table}

  \begin{figure}
  	\begin{subfigure}{0.35\linewidth}
  		\begin{center}
  			\includegraphics[scale=0.1]{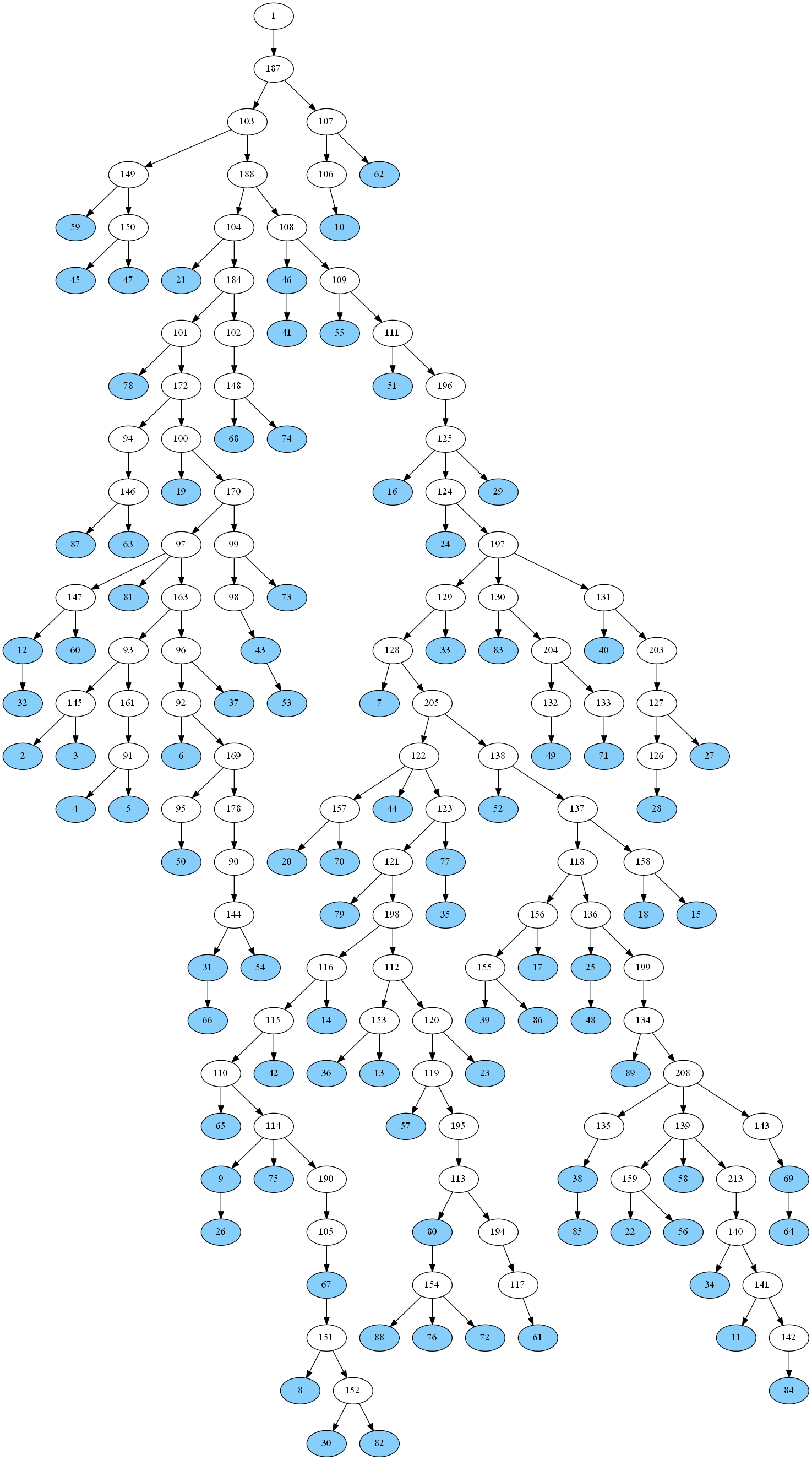}
  			\caption{$CStA$}
  			\label{subfig:StTP}
  		\end{center}
  	\end{subfigure}
  	\hfill
  	\begin{subfigure}{0.64\linewidth}
  		\begin{center}
  			\includegraphics[scale=0.105]{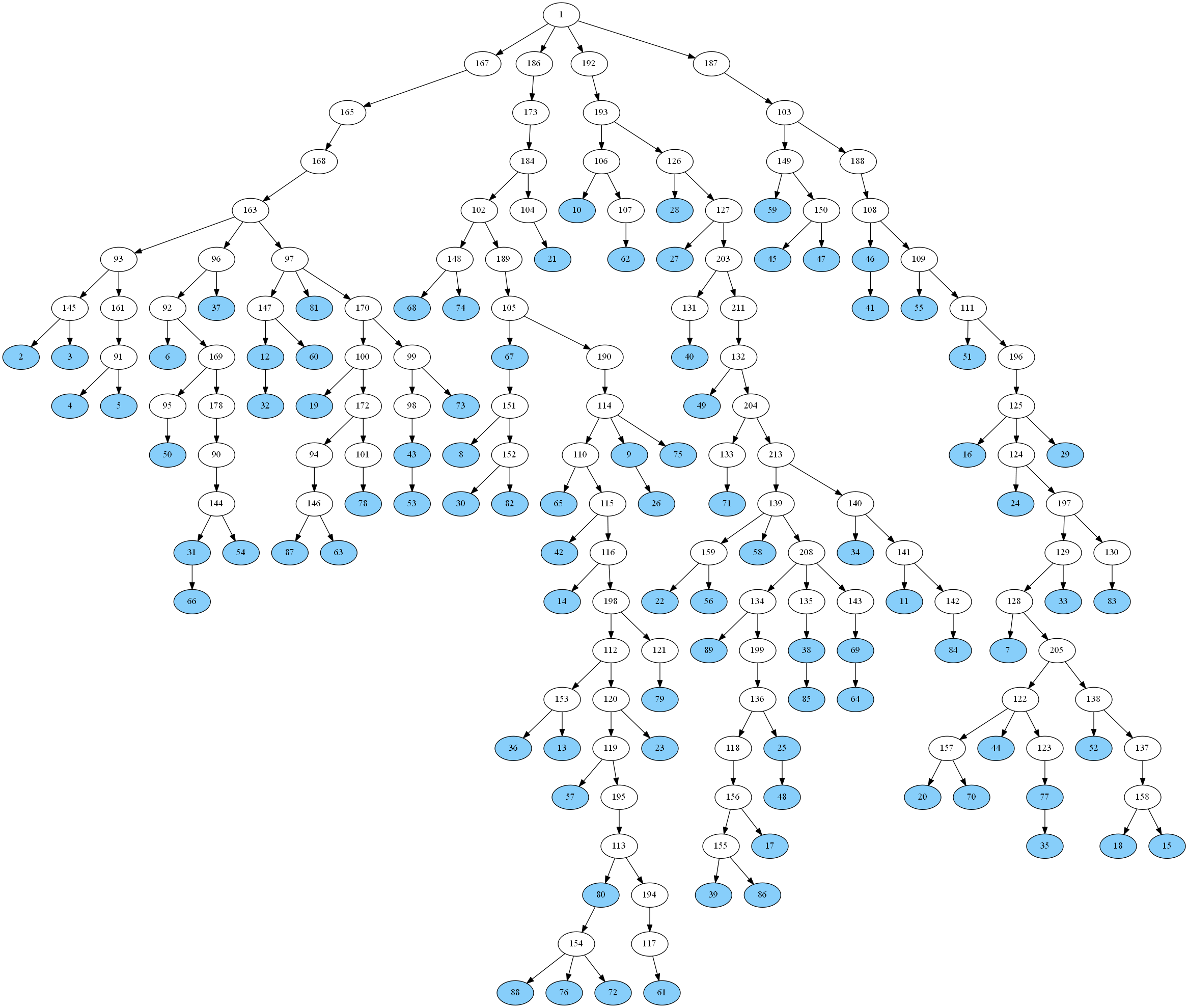}
  			\caption{$RCStA$}
  			\label{subfig:RCStA}
  		\end{center}
  	\end{subfigure}
  	  	\begin{subfigure}{1\linewidth}
  	  		\begin{center}
  	  			\includegraphics[scale=0.09]{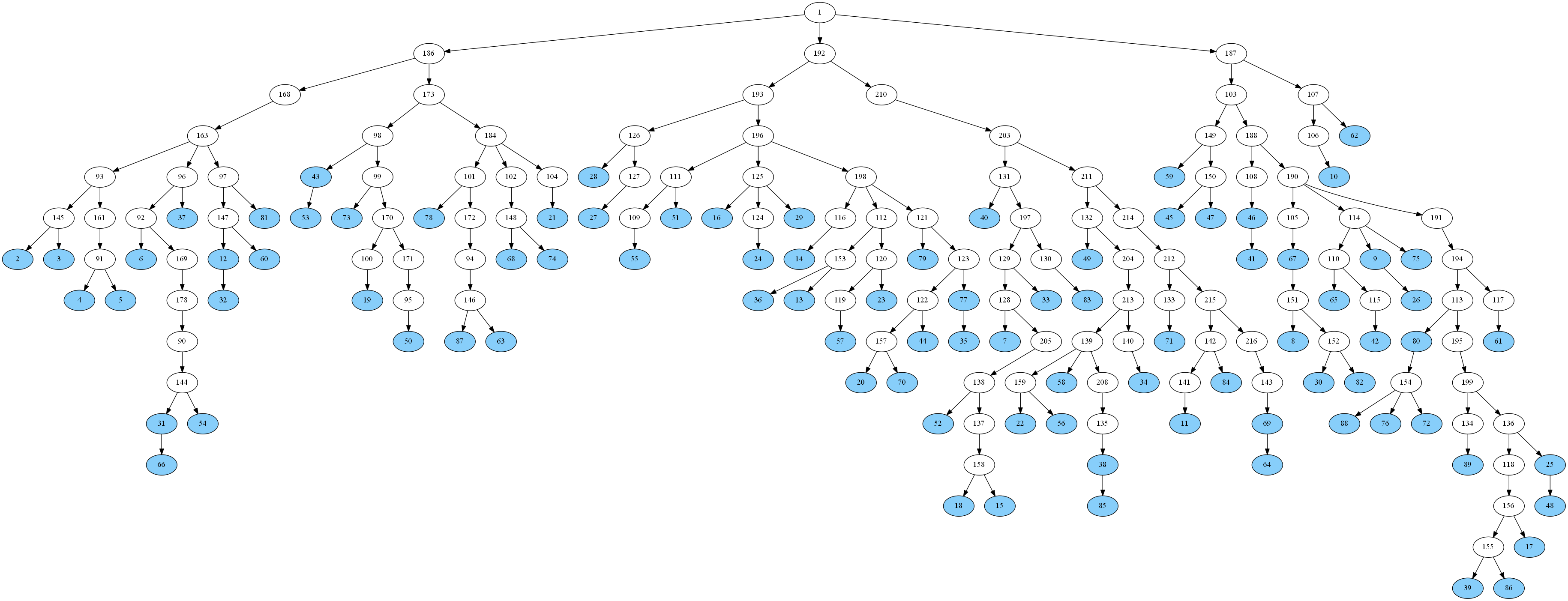}
  	  			\caption{$BRCStA$}
  	  			\label{subfig:BRCStA}
  	  		\end{center}
  	  	\end{subfigure}
  	\begin{subfigure}{1\linewidth}
  		\begin{center}
  			\includegraphics[scale=0.1]{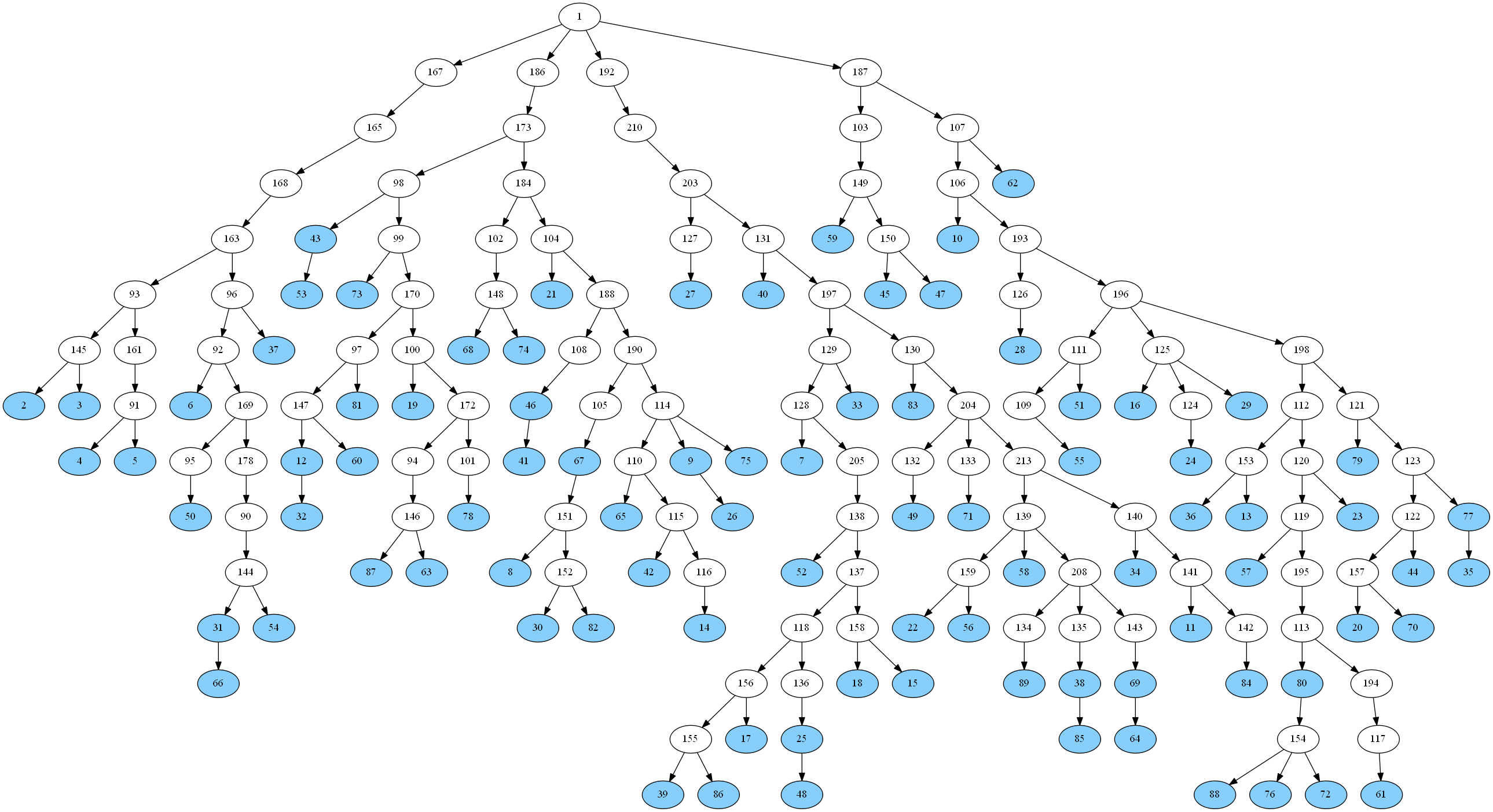}
  			\caption{$BRCStA_{bounded-robust-cost}$}
  			\label{subfig:BRStPWithBounds}
  		\end{center}
  	\end{subfigure}
  	\hfill
  	\caption{Resulting arborescences}
  	\label{fig:resultsBRStP}
  \end{figure}

  When trying to minimize the number of disconnected terminals in the worst
case (see \textbf{RCStA} in Figure \ref{subfig:RCStA}), we have seen that the associated solutions have a reasonable cost, but the average robustness is not good, since the tree remains too deep. When finding the Balanced Steiner arborescence (see \textbf{BRCStA} in Figure \ref{subfig:BRCStA}), the balanced robustness is optimal and the robustness in the worst case is fine, but the cost can be really high (a raise of the optimal cost to 64\% on those data sets). Adding bounds on both cost and worst-case robustness, while minimizing the balanced robustness (see \textbf{BRCStA}$_{bounded-robust-cost}$ in Figure \ref{subfig:BRStPWithBounds}), yields a solution which has both a reasonable cost and a really good worst-case and balanced robustness, and hence it seems that it actually yields the best compromise between the three optimization criteria (the cost and the two types of robustness considered here).\\

\section{Capacitated Rooted k-Edge Connected Steiner Network problem (\textbf{CRkECSN})}
\label{sec:CRkECSN}

\subsection{Definitions and notations}

In this section, we study the problem of designing networks which are resilient to a given number of arc-failures. A feasible solution to the problem we shall consider is then a network rooted at a given root and covering a given set of terminals, and such that, after deleting any $k$ arcs, it is still possible to route a unit of flow from the root to each terminal, while respecting given capacities on the arcs. Formally, we define the following problem:\\

  \textbf{Capacitated Rooted k-Edge Connected Steiner Network problem (CRkECSN)}

  \textit{INSTANCE: } A connected digraph $G=(V,A,r,T,u,c)$ with $r \in V$, $T \subseteq V \setminus \{r\}$, a capacity function $u$ on $A$, a cost function $c$ on $A$, an integer $k$ with $1 \leq k \leq |A| - 1$.

  \textit{QUESTION: } Find a subset $A' \subseteq A$ of minimum cost such that there is a feasible flow (i.e. respecting the arc capacities) routing a unit of flow from $r$ to each vertex of $T$ in the subgraph of $G$ induced by $A'$, even if any $k$ arcs in $A'$ are deleted. \\

  As we consider uniform production at the terminals, we also assume that $u$ is an integer function. \\

  \begin{prop}
  	For $k \in \mathbb{N^*}$, there are at least $k+1$ arc-disjoint paths between the root and each terminal in any feasible solution. Furthermore, any inclusion-wise minimal feasible solution induces at least a 2-edge-connected graph in
  the underlying undirected graph.
  \end{prop}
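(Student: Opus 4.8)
The plan is to treat the two assertions separately: the first via Menger's theorem, and the second by exploiting inclusion-wise minimality together with a flow-decomposition argument.

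For the first part, I would fix an arbitrary terminal $t \in T$ and argue by contradiction. Suppose that in a feasible solution $A'$ there are at most $k$ arc-disjoint directed paths from $r$ to $t$. By Menger's theorem (the directed, arc-disjoint version), there is then a set $F \subseteq A'$ with $|F| \le k$ whose removal leaves no directed $r$–$t$ path in the subgraph induced by $A' \setminus F$. Since $u$ is a positive function, routing one unit of flow from $r$ to $t$ requires at least one such path; hence after deleting the $\le k$ arcs of $F$ no feasible flow can send a unit from $r$ to $t$, contradicting feasibility. So there are at least $k+1$ arc-disjoint $r$–$t$ paths, and since $t$ was arbitrary this holds for every terminal.

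For the second part, let $A'$ be an inclusion-wise minimal feasible solution and let $\bar G'$ denote its underlying undirected (multi)graph on the set $V'$ of endpoints of arcs of $A'$, noting that $\{r\} \cup T \subseteq V'$. The key tool is that any feasible flow decomposes into simple $r$–$t$ paths (carrying the demand) together with cycles, and that a side of a bridge containing neither $r$ nor a terminal is never crossed by a demand-carrying path and can only carry removable cycles. I would first use this to show $\bar G'$ is connected: the component of $r$ contains every terminal (each must be reachable when nothing is deleted), and the arcs lying in any other component are unused in every deletion scenario, so removing them keeps the solution feasible, contradicting minimality unless no such arcs exist. Next I would show $\bar G'$ has no bridge. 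If $e$ were a bridge, removing it would split $\bar G'$ into $C_1 \ni r$ and a second part $C_2$; since $k \ge 1$ we may delete the single arc underlying $e$ (a bridge corresponds to exactly one arc, since two anti-parallel arcs would yield parallel undirected edges and hence no bridge), which would disconnect any terminal of $C_2$ from $r$, so $C_2$ contains no terminal and not $r$. But then no demand-carrying path uses the arcs internal to $C_2$ or the bridge arc, so deleting all of them preserves feasibility in every deletion scenario, again contradicting minimality. A connected bridgeless graph with at least two vertices (which holds since $T \neq \emptyset$) is 2-edge-connected, which is the claim; alternatively, the first part already places $r$ and all terminals in a common 2-edge-connected component of $\bar G'$, after which one argues everything outside it is superfluous.

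The routine ingredients are Menger's theorem and flow decomposition. The part needing care — and the main obstacle — is making the minimality argument airtight against the universally quantified ``after deleting any $k$ arcs'': for each candidate reduced solution $A''$ and each deletion set $F \subseteq A''$ with $|F| \le k$, one must exhibit a feasible flow, which I would do by starting from the flow guaranteed for $A' \setminus F$ (legitimate since $F \subseteq A'$), decomposing it, and discarding the paths and cycles that stray outside the kept arcs, then checking that the discarded pieces were indeed only cycles disjoint from $\{r\} \cup T$ (never demand-carrying paths) so that all demands are still met within capacity.
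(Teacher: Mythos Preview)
Your proposal is correct and follows essentially the same approach as the paper: Menger's theorem for the first assertion, and a bridge-plus-minimality contradiction for the second. You are considerably more careful than the paper's terse proof---explicitly handling connectedness, the multigraph convention for anti-parallel arcs, and the flow-decomposition argument that justifies why removing the bridge (and the dangling component) preserves feasibility across \emph{all} $k$-deletion scenarios, a point the paper asserts without justification.
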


  \begin{proof}
 The first part of the property is trivial. Let $G'$ be an inclusion-wise minimal feasible solution and let assume $G'$ is not 2-edge-connected in the underlying undirected graph. Then there exists at least one edge whose removal cuts $G'$ into two parts. If the part that does not include the root contains terminals, then $G'$ is clearly not a feasible solution because, if we remove this edge, then at least one terminal cannot be reached from the root. Otherwise, $G'$ is not inclusion-wise minimal because, if we remove this edge, then the resulting graph is still a feasible solution. Hence, any inclusion-wise minimal feasible solution induces at least a 2-edge-connected graph.
  \end{proof}

  In order to simplify the formulations proposed in the next sections, we add to the input graph a vertex $s$ (which corresponds to a fictive sink) connected to every terminal $t \in T$ by a fictive arc $(t,s)$ with $c_{ts} = 0$ and $u_{ts} = 1$. Then, $s$ is added to $V$ and the fictive arcs are added to $A$, and we denote by $A_I$ the set of initial arcs. Finding a flow which routes one unit of flow between $r$ and each terminal in the input graph is equivalent to finding a flow of value $|T|$ from $r$ to $s$ in the transformed graph. For each subset $S \subset V$, let $\delta^-(S)$ be the set of arcs entering $S$. We have $\delta^-(S)=\{(i,j) \in A \ | \ i \in V \setminus S, \ j \in S\}$.

  \subsection{Formulations}\label{subsec:formulations}

  \subsubsection{Cutset formulation} \label{subsubsec:cut-set}

  We introduce, for each $(i,j) \in A$, a binary variable $y_{ij}$ equal to 1 if the arc $(i,j)$ is selected in $A'$, 0 otherwise. Consider the $r-s$ cuts $[V \setminus V_S,V_S] $ with $ V_S \subset V $, $r \in V \setminus V_S$, $s \in V_S$ and $V_S \neq \{s\}$, and  let $\mathcal{S}$ be the set of all the associated cut-sets $S$ in $A$, i.e. $S = \delta^{-}(V_S)$ for each $V_S$. $\mathcal{S}$ is the set of $r-s$ cutsets except the one implying only fictive arcs. Notice that if $S \in \mathcal{S}$ then $S \cap A'$ is a cut-set in the selected network.
  For any set $S \in \mathcal{S}$, let $C_k^S$ be the set of subsets of $S$ of size  $k$.
  For each $S$, we define $M_{S}$ as the maximum capacity of a subset of $k$ selected arcs of $S$:

  \begin{subequations}
  	\begin{empheq}{align}
  	M_{S} \quad = \quad \max\limits_{C \in C^S_k} \sum\limits_{(i,j) \in C}  u_{ij}y_{ij} \label{eq:MS}
  	\end{empheq}
  \end{subequations}

  $M_S$ corresponds to the maximum capacity that can be lost in the cut-set $S$ after the deletion of $k$ arcs. We propose the following cutset formulation:

  \begin{subequations}
  	\begin{empheq}{align}
  	\min\limits_{y}  & \quad  \sum\limits_{(i,j) \in A} c_{ij}y_{ij} \quad \notag \\
  	\text{s.t.}  &  \sum\limits_{(i,j) \in S}u_{ij}y_{ij} \ - \ M_{S} \ \geq \ |T| & \quad \forall S \in \mathcal{S} \quad \label{cstr:cutset-cut}\\
  	&  \ \quad y_{ij} \in \{0,1\} \qquad & \forall (i,j) \in A \quad \notag
  	\end{empheq}
  \end{subequations}

  The constraints (\ref{cstr:cutset-cut}) ensure that, for each cut, the capacity of the cut after the worst-case deletion of $k$ arcs of the cut-set is at least equal to the number of terminals, i.e., one can still route $|T|$ units of flow from $r$ to $s$ while respecting the capacities. They are necessary to every feasible solution. Indeed, they ensure that, for each cut-set in the graph induced by the arcs $(i,j)$ such that $y_{ij}=1$, the capacity of the cutset minus the $k$ maximal arc capacities of the cut is greater than or equal to the number of terminals. If a constraint is not satisfied, it means that, by removing $k$ arcs, the capacity of the min-cut in the graph induced by $y$ becomes smaller than $|T|$. Those constraints are also sufficient to ensure a feasible network. Indeed, if they are satisfied for each cut-set $S \in \mathcal{S}$, it means that you cannot find a set of $k$ arcs whose removal will induce a min-cut with capacity smaller than $|T|$ (which is a necessary and sufficient condition for the existence of a flow of value $|T|$).\\

 The constraints (\ref{cstr:cutset-cut}) are non linear because of the use of the maximum operator in the definition of $M_{S}$. To linearize it, we can rewrite (\ref{cstr:cutset-cut}) as follows:

  \begin{subequations}
  	\begin{empheq}{align}
  	\sum\limits_{(i,j) \in S \setminus C} u_{ij}y_{ij} \ \quad \geq \quad |T| \qquad \forall S \in \mathcal{S}, \ \forall C \in C^S_k \label{cstr:cutset-MS}
  	\end{empheq}
  \end{subequations}

  The number of constraints (\ref{cstr:cutset-MS}) being exponential, we propose a constraints generation algorithm. We begin with a small number of constraints (\ref{cstr:cutset-MS}), associated with a small subset of $\mathcal{S}$. We obtain a lower bound for our problem. Then we select a cut-set that does not verify some constraint (\ref{cstr:cutset-cut}): given a network induced by the arcs $(i,j)$ such that $\hat{y}_{ij}=1$ (where $\hat{y}$ is the current value of $y$), we aim to find the cut of minimum residual capacity once we delete its $k$ most capacitated arcs. If this capacity is smaller than $|T|$, we add the constraints associated with this cut-set, otherwise the algorithm terminates. For small values of $k$, one straightforward method to find this cut of minimum residual capacity is the following: for each combination $C$ of $k$ arcs in $A_I$ which are selected in the current solution, compute the min-cut on the graph where the capacity of each arc $(i,j)$ is defined as $u_{ij} \hat{y}_{ij}$, except for the $k$ arcs of $C$ whose capacities are set to 0. Otherwise, the following MIP can be used:
  	
  	  \begin{subequations}
  		\begin{empheq}{align}
  		\min\limits_{s, d, v} & \quad \rlap{$\sum\limits_{(i,j) \in A} u_{ij}\hat{y}_{ij}s_{ij} $} \notag\\
  		\text{s.t} & \quad s_{ij} + d_{ij} - v_{i} + v_{j} & \geq \quad & 0 \quad & \forall (i,j) \in A \label{cstr:cutConstraint}\\
  		& \quad v_{r} & =  \quad& 1 \quad & \label{cstr:cutR}\\
  		& \quad v_{s} & =  \quad& 0 \quad & \label{cstr:cutS}\\
    	& \quad \sum\limits_{(i,j) \in A}   d_{ij} & \leq \quad& k \quad & \label{cstr:maxDel}\\
     	& \quad \sum\limits_{t \in T}   d_{ts} & = \quad& 0 \quad & \label{cstr:noDelFic}\\
     	& \quad \rlap{$s,d \ \in \ \{0,1\}^{|A|}, \qquad v \ \in \ \{0,1\}^{|V|}$}
  		\end{empheq}
  	\end{subequations}

  	 In this MIP, the variable $v$ defines a $r-s$-cut on the network: any vertex $j$ with $v_j=1$ is in the same part of the cut as $r$, and any vertex $i$ with $v_i=0$ is in the same part as $s$. The variable $d$ defines the deleted arcs, $d_{ij}=1$ if and only if arc $(i,j)$ is deleted, whereas $s$ defines the selected arcs, $s_{ij}=1$ if and only if arc $(i,j)$ is selected in the cut. Then, the constraints (\ref{cstr:cutConstraint}) ensure that $\hat{S}=\{(i,j)$ s.t.  $s_{ij}=1$ or   $d_{ij}=1  \}$ defines a cutset in the current network, and the objective function  $\sum_{(i,j) \in A} u_{ij}\hat{y}_{ij}s_{ij}$ represents the residual capacity of $\hat{S}$, i.e., the capacity of the undeleted arcs of $\hat{S}$.  the constraints (\ref{cstr:cutR}) and (\ref{cstr:cutS}) ensure that the root and the sink are not in the same part of the cut. The constraints (\ref{cstr:maxDel}) and (\ref{cstr:noDelFic}) ensure that there are no more than $k$ arc deletions, and that no fictive arc can be deleted. Notice that, in any optimal solution, the constraint (\ref{cstr:maxDel}) is saturated and, for each arc $(i,j)$, at most one of $s_{ij}$ and $d_{ij}$ is equal to 1.
  	  If the solution provides a cutset with a residual capacity at least equal to $|T|$, then the solution is feasible, and so optimal. Otherwise, we add the associated constraint to the main MIP.\\

	In the case of a uniform capacity $U$ on each arc $a \in A_I$, $M_S$ in the constraints (\ref{cstr:cutset-cut}) becomes a constant equal to $kU$ and hence these constraints are linear, and the constraints (\ref{cstr:cutset-MS}) are useless. The number of constraints is still exponential, but highly reduced compared to the non-uniform case. The formulation can be rewritten as follows:

  \begin{subequations}
  	\begin{empheq}{align}
  	\min\limits_{y}  & \quad  \sum\limits_{(i,j) \in A} c_{ij}y_{ij} \quad \notag \\
  	\text{s.t.}  & \sum\limits_{(i,j) \in S}u_{ij}y_{ij} \geq \ |T| \ + \ kU \ & \quad \forall S \in \mathcal{S} \quad \\
  	& \ \quad y_{ts} \quad = \quad 1 \qquad & \forall t \in T \quad \\
  	&  \ \quad y_{ij} \in \{0,1\} \qquad & \forall (i,j) \in A \quad \notag
  	\end{empheq}
  \end{subequations}

    Adapting the formulation to the undirected case where we are given a set of edges $E$ instead of arcs is quite straightforward. Indeed, it can be done by considering the undirected cut-sets of the graph instead of the directed ones.

  \subsubsection{Flow formulation} \label{subsubsec:flow}

  In this section, we introduce a formulation based on flow variables. We define $\mathcal{F}$ as the set of all possible arc-failure scenarios: it corresponds to the set of all $k$-combinations in $A_I$. We introduce the variable $x_{ij}^{F}$ which represents the amount of flow routed through the arc $(i,j) \in A$ when the scenario $F \in \mathcal{F}$ occurs. The variable $y$ is defined as in the previous formulation (see Subsection \ref{subsubsec:cut-set}). We propose the following flow formulation:

  \begin{subequations}
  	\begin{empheq}{align}
  	\min\limits_{x,y}   &  \sum\limits_{(i,j) \in A} c_{ij}y_{ij} \quad \notag\\[-3pt]
  	\text{s.t. }  & \hspace{-.2cm}\sum\limits_{i \in \Gamma^-(j)} \hspace{-.2cm} x^{F}_{ij} \ - \ \hspace{-.3cm}\sum\limits_{k \in \Gamma^+(j)} \hspace{-.2cm}x^{F}_{jk} = 0  & \forall j \in V \setminus \{r,s\}, \ \forall F \in \mathcal{F} \label{cstr:flow-flowCons} \\[-3pt]
  	& \hspace{-.2cm}\sum\limits_{t \in \Gamma^-(s)}\hspace{-.2cm} x^{F}_{ts} \quad  =  \quad |T|  &\forall F \in \mathcal{F} \label{cstr:flow-flowConsSink}\\
  	& x^{F}_{ij} \quad  \leq  \quad u_{ij}y_{ij}  & \forall (i,j) \in A, \ \forall F \in \mathcal{F} \label{cstr:flow-cap}\\
  	& x^{F}_{ij} \quad  =  \quad 0  & \forall F \in \mathcal{F}, \ \forall (i,j) \in F \label{cstr:flow-break}\\
  	&  \omit\rlap{$x \in \mathbb{R}_+^{|A| \times |\mathcal{F}|}, \quad y \in \{0,1\}^{|A|} $} & \notag
  	\end{empheq}
  \end{subequations}

  The constraints (\ref{cstr:flow-flowCons}) and (\ref{cstr:flow-flowConsSink}) ensure that there is a flow of value $|T|$ for each arc-failure scenario $F \in \mathcal{F}$, meaning that we can still route a unit of flow to each terminal after any $k$ arc failures. The constraints (\ref{cstr:flow-cap}) ensure that the arc capacities are respected for each arc-failure scenario $F \in \mathcal{F}$. The constraints (\ref{cstr:flow-break}) ensure that, in each scenario $F \in \mathcal{F}$, no flow is routed through deleted arcs. One can notice that the variable $x$ must be an integer (because it corresponds to a number of terminals). However, we relax this integrality constraint. Indeed, for any value of $y \in \{0,1\}^{|E|}$, setting the value of $x$ corresponds to routing a set of flows of value $|T|$ on $|E|$ different networks with integer capacities. Then, for any value of $y \in \{0,1\}^{|E|}$, there exists a solution where $x$ is integer. As only the variable $y$ is involved in the objective function, we have that there always exists an optimal solution with $x$ integer.\\

  The number of variables $x^{F}_{ij}$ and constraints (\ref{cstr:flow-flowCons}) and (\ref{cstr:flow-flowConsSink})  being exponential for arbitrary values of $k$, we propose a constraints-and-columns generation algorithm to solve the problem. We begin with a small subset of $\mathcal{F}$. The separation problem is the problem of the $k$ most vital links in a flow network \cite{ratliff1975finding}: we search for the $k$ arcs which, once simultaneously deleted, reduce the most the value of a maximum $s-t$ flow. We use a procedure similar to the one used in Subsection \ref{subsubsec:cut-set}: for small values of $k$, we compute a maximum $s-t$ flow for each combination of $k$ selected arcs of $A_I$. If there is a combination of arcs whose deletion results in a maximum $s-t$ flow lower than $|T|$, we add this arc-failure scenario, else the solution is optimal. If $k$ is too big, we use an auxiliary MIP.\\

  In order to adapt the formulation to the undirected case with a set of edges $E$ instead of a set of arcs $A$, one can define for each $[i,j] \in E$ the variables $y_{ij}$, $x_{ij}^F$ and $x_{ji}^F$. The function $\Gamma^+$ and $\Gamma^-$ are replaced by the function $\Gamma$ in the constraints (\ref{cstr:flow-flowCons}) and (\ref{cstr:flow-flowConsSink}), while the constraints (\ref{cstr:flow-cap}) and (\ref{cstr:flow-break}) are replaced by:

\begin{subequations}
	\begin{empheq}{align}
	 x^{F}_{ij} \ + \ x^{F}_{ji}  \quad  \leq  \quad u_{ij}y_{ij} \qquad & \forall [i,j] \in E, \ \forall F \in \mathcal{F} \\
	 x^{F}_{ij} \ + \ x^{F}_{ji}  \quad  =  \quad 0  \qquad & \forall F \in \mathcal{F}, \ \forall [i,j] \in F
	\end{empheq}
\end{subequations}

  A feasible solution induced by $y$ and $x$ implies a flow of value $|T|$ for each scenario $F \in \mathcal{F}$. Then, if a given solution yields a strictly positive flow on both $x_{ij}^F$ and $x_{ji}^F$ for a given edge $[i,j]$ and a given scenario $F$, it is trivial that there exists another flow at least as good as this one, but in which the amount of flow on either $x_{ij}^F$ or $x_{ji}^F$ is 0.

  \subsubsection{Bilevel formulation} \label{subsubsec:bilevel}

  The bilevel formulation proposed here is particular in that the second level is a $\min \max$ problem. It can be seen as a game with a defender and an attacker (corresponding respectively to the leader and the follower).\\

  For each $(i,j) \in A$, we introduce a variable $x_{ij}$ which corresponds to the amount of flow that the defender chooses to route through the arc $(i,j)$. The variable $y$ is defined as in Subsection \ref{subsubsec:cut-set}. We also introduce the binary variables  $b_{ij}$, $\forall (i,j) \in A$:  $b_{ij}= 1$ if and only if the attacker chooses to delete the arc $(i,j)$. Moreover, we assume without loss of generality that there is no arc of the form $(v,r)$ for some vertex $v$. Then, we can define the following polyhedron:

  \[
  \mathcal{X}(y,b)=\left\{
  x \in \mathbb{R}^{|A|} \left|
  \begin{array}{ll}
  \sum\limits_{i \in \Gamma^-(j)} x_{ij} -  \sum\limits_{k \in \Gamma^+(j)} x_{jk} = 0 \quad &\forall j \in V \setminus \{r,s\}\\
  x_{ij} \quad \leq \quad u_{ij}y_{ij} \ & \forall (i,j) \in A\\
  x_{ij} \quad \leq \quad u_{ij}(1 - b_{ij})  \ & \forall (i,j) \in A\\
  x_{ij} \quad \geq \quad 0 \ & \forall (i,j) \in A
  \end{array}
  \right.
  \right\}.
  \]\\

  This polyhedron $\mathcal{X}(y,b)$ corresponds to the set of possible flows on the subgraph of $G$ induced by the arcs $(i,j)$ such that $y_{ij}=1$, provided they have not been deleted (i.e. $b_{ij} = 0$). The polyhedron $\mathcal{X}(y,b)$ is defined by the flow conservation constraints, the capacity constraints and the constraints imposing a flow equal to 0 on any arc which is deleted. We also define the following polyhedrons:

  \begin{center}
	$\mathcal{B} = \{ \ b \in \{0,1\}^{|A|} \ | \ \sum_{(i,j) \in A}  b_{ij} \leq k \ ; \ b_{ts} = 0 \quad \forall t \in T\ \}$\\
  	\vspace{5pt}
  \end{center}

  The polyhedron $\mathcal{B}$ defines the set of possible scenarios of arc failures (it ensures that no fictive arc can be deleted). We propose the following bilevel program:

  \begin{subequations}
  	\begin{empheq}{align}
  	\min\limits_{y \in  \{0,1\}^{|A|}} \quad &  \sum\limits_{(i,j) \in A} c_{ij}y_{ij} &&&& \notag\\
  	\text{s.t.} \quad & f(y) \geq |T| &&&& \label{fYCstr}\\
  	&\text{where } f(y) = & \hspace{-.2cm} \min\limits_{b \in \mathcal{B}} \quad & \max\limits_{x \in \mathcal{X}(y,b)} &&  \hspace{-.3cm} \sum\limits_{j \in \Gamma^+(r)} x_{rj}&&
  	\end{empheq}
  \end{subequations}\\

where  $\{(i,j)$ s.t. $y_{ij}=1\}$ defines the set of selected arcs. At the upper level, the defender selects the set of arcs to be added to the network, by choosing a value of $y$ in $ \{0,1\}^{A}$. The attacker then deletes some arcs by setting the variable $b \in \mathcal{B}$ in order to minimize the maximum flow that the defender will compute by setting the variable $x$ in the flow polyhedron $\mathcal{X}(y,b)$. The aim of the defender is to ensure that this flow is at least equal to $|T|$ (see constraint (\ref{fYCstr})).\\

  Consider the $\max$ problem in the lower level: at this stage, $y$ and $b$ are already fixed; we refer to their values as $\hat{y}$ and $\hat{b}$ respectively. The problem is a max-flow problem from $r$ to $s$, with two sets of capacity constraints. In our problem, the flow must be integral since it corresponds to a number of terminals. However, it is well-known that the matrix of coefficients $M$ in the arc-formulation of a max-flow is totally unimodular. Then, adding the second set of capacity constraints is equivalent to appending the identity matrix to $M$: the matrix remains totally unimodular and, as the capacities are integers, we ensure that the extreme points of the polyhedron defined by $\mathcal{X}(y,b)$ have integral coordinates. Thus, we can relax the integrality constraints on $x$.\\

  In this $\max$ problem of the lower level, there always exists a feasible flow of value 0 and the problem is also trivially upper bounded. Hence, the strong duality holds and we can introduce the dual of the lower level problem, after a slight reformulation due to the totally unimodular matrix:

  \begin{subequations}
  	\begin{empheq}{align}
  	\min\limits_{\lambda, \mu, \gamma} & \quad \rlap{$\sum\limits_{(i,j) \in A} u_{ij}\hat{y}_{ij}\lambda_{ij} + \sum\limits_{(i,j) \in A} u_{ij}(1 - \hat{b}_{ij})\gamma_{ij}$} \notag\\
  	\text{s.t} & \quad \lambda_{ij} + \gamma_{ij} - \mu_{i} + \mu_{j} & \geq \quad & 0 \quad & \forall (i,j) \in A \label{lambdaGammaCstr}\\
  	& \quad \mu_{r} & =  \quad& 1 \quad & \label{muRconstraints}\\
  	& \quad \mu_{s} & =  \quad& 0 \quad & \label{muSConstraints}\\
  	& \quad \rlap{$\lambda,\gamma \ \in \ [0,1]^{|A|}, \qquad \mu \ \in \ [0,1]^{|V|}$} \label{3L-varCstr}
  	\end{empheq}
  \end{subequations}

  This problem is a special formulation of a min-cut problem: $\mu$ defines the two parts of the cut (sets of vertices $i \in V$ such that either $\mu_i =0$ or $\mu_i =1$). The variables $\gamma$ and $\lambda$ define the cut-set of the corresponding cut: for each arc $(i,j)$ in the cut-set, we have either $\lambda_{ij}=1$ or $\gamma_{ij} = 1$, otherwise we have $\gamma_{ij}=\lambda_{ij}=0$. Because of the economic function and the positive capacities, we have that $\gamma_{ij}$ is equal to 1 for at least all arcs $(i,j)$ in the cut-set with $\hat{b}_{ij} = \hat{y}_{ij} = 1$ (i.e., the arcs selected but deleted), while $\lambda_{ij}$ is equal to 1 for at least all arcs $(i,j)$ in the cut-set with $\hat{b}_{ij} = \hat{y}_{ij} = 0$ (i.e., the arcs that are neither selected nor deleted). For other arcs in the cut-set, it does not matter which one is set to 1. We denote by $\mathcal{D}$ the polyhedron defined by the dual constraints (\ref{lambdaGammaCstr})$-$(\ref{3L-varCstr}).\\

  As the lower level can be reformulated as a $\min-\min$ function by using the dual described above, it can then be rewritten as follows:

  \begin{equation*}(2LP) \left |
  \begin{array}{ll}
  \min\limits_{b,\lambda, \mu, \gamma} & \quad \sum\limits_{(i,j) \in A} u_{ij}\hat{y}_{ij}\lambda_{ij} + u_{ij}(1 - b_{ij})\gamma_{ij} \notag\\
  \text{s.t} \quad & \quad b \in \mathcal{B}\\
  & \quad (\lambda, \mu, \gamma) \in \mathcal{D}
  \end{array}
  \right.
  \end{equation*}

  At this point, $b$ is a variable, so the objective function is non-linear. We linearize the terms $b_{ij} \gamma_{ij}$ in a classical way  by introducing binary variables $l_{ij}$ verifying the set of constraints  defined by $\mathcal{L}(b,\gamma)$:

    \[
    \mathcal{L}(b,\gamma)=\left\{
    l \in \mathbb{R}^{|A|} \left|
    \begin{array}{ll}
    l_{ij} \quad \leq \quad b_{ij} \ & \forall (i,j) \in A\\
    l_{ij} \quad \leq \quad \gamma_{ij}  \ & \forall (i,j) \in A\\
    l_{ij} \quad \geq \quad \gamma_{ij} - (1 - b_{ij}) \ & \forall (i,j) \in A\\
        l_{ij} \quad \geq \quad 0 \ & \forall (i,j) \in A
    \end{array}
    \right.
    \right\}
    \]\\

  We also define the function $g(y,\lambda,\gamma, l) =  \sum_{(i,j) \in A} \left[u_{ij}y_{ij}\lambda_{ij} + u_{ij}\gamma_{ij} - u_{ij}l_{ij}\right]$. We can then rewrite the bilevel program as:

    \begin{equation*}
    \begin{array}{lllll}
    \min\limits_{y \in  \{0,1\}^{|A|}} & \rlap{$\sum\limits_{(i,j) \in A} c_{ij}y_{ij} $} \notag\\
    \text{s.t} \quad & \quad f(y) \geq |T| && \label{2L-fYCstr}\notag\\
	 & \qquad\text{where }\quad & f(y) \quad = & \min\limits_{b, \lambda, \gamma, \mu, l} & \rlap{$  g(y,\lambda,\gamma, l)$}\\
	 &  &  & \quad \text{s.t. } \quad& b \in \mathcal{B}  \notag\\
	 	&  &  \quad& & (\lambda, \mu, \gamma) \in \mathcal{D}  \notag\\
	 	&  &  \quad&  &l \in \mathcal{L}(b,\gamma) \notag
    \end{array}
    \end{equation*}

  We can then consider the convex hull of the lower-level polyhedron, and denote by $\mathcal{H}$ the set of its extreme points. One can notice that this convex hull does not depend on $y$ (only $g(\cdot)$ does): the set of extreme points $\mathcal{H}$ remains the same for every $y \in  \{0,1\}^{A}$. We denote by  $(\hat{\lambda}^h, \hat{\gamma}^h, \hat{l}^h)$ the respective values of $(\lambda, \gamma, l)$ at the extreme point $h \in \mathcal{H}$. We can then reformulate the bilevel formulation as a single-level one as follows:

  \begin{subequations}
  	\begin{empheq}{align}
  	\min    &  \sum\limits_{(i,j) \in A} c_{ij}y_{ij} &\notag\\
  	\text{s.t.} \quad & g(y,\hat{\lambda}^h,\hat{\gamma}^h, \hat{l}^h)  \ \geq \ |T| & \forall h \in \mathcal{H}\label{cstr:biGA} \\
  	&  y \in  \{0,1\}^{|A|} & \label{cstr:polyYP}\\
  	\mathbf{(BP)} \hspace{2cm}&  b \in \mathcal{B} & \label{cstr:polyB}\\
  	&  (\lambda, \mu, \gamma) \in \mathcal{D} \label{cstr:polyD}\\
  	&  l \in \mathcal{L}(b,\gamma) & \label{cstr:polyL}&
  	\end{empheq}
  \end{subequations}

  The constraints (\ref{cstr:biGA}) ensure that, for each extreme point of $\mathcal{H}$, $f(y)$ is greater than $|T|$ (i.e., the minimum value of $f(y)$ over the polyhedron defined by the constraints (\ref{cstr:polyB})$-$(\ref{cstr:polyD}) is greater than $|T|$), meaning that the value of a maximum flow cannot become smaller than $|T|$, even after any $k$ breakdowns.

  \begin{rmk}
  	In $\mathbf{(BP)}$, $g(y,\lambda,\gamma, l)$ is non-linear because of the products $y_{ij}\lambda_{ij}$, but they can be linearized as it has been done for  $b_{ij} \gamma_{ij}$ above.
  \end{rmk}

  However, there is an exponential number of constraints (\ref{cstr:biGA}), and we do not know how to describe explicitly the convex hull of $\mathcal{H}$. To tackle this issue, we use a constraints generation algorithm where we relax the constraints (\ref{cstr:biGA}) and use $(2LP)$ as the separation problem: while the optimum value of $(2LP)$ is smaller than $|T|$ for the current optimal solution $\hat{y}$, we generate the constraints (\ref{cstr:biGA}) associated with the extreme point whose coordinates are the optimal values of $(b, \lambda, \gamma, \mu, l)$ in $(2LP)$.

  \begin{prop}\label{betterCutProp}
  	Let $\hat{y}^{1}$ and $\hat{y}^{2}$ be two feasible solutions of $\mathbf{(BP)}$ such that $\hat{y}^{1} \geq \hat{y}^{2}$, i.e., $\hat{y}^{1}_{ij} \geq \hat{y}^{2}_{ij}$ for each arc $(i,j)$. If adding a constraint $g(y,\lambda,\gamma,l) \leq g(y,\hat{\lambda}^a, \hat{\gamma}^a, \hat{l}^a)$ makes any solution with $y = \hat{y}^{1}$ infeasible, then it also makes any solution with $y = \hat{y}^{2}$ infeasible.
  \end{prop}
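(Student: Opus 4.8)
The plan is to reduce the proposition to the coordinatewise monotonicity of $g$ in the variable $y$. First I would make precise what the cut in question is: running the separation oracle $(2LP)$ on the current incumbent returns an extreme point $a\in\mathcal H$ with coordinates $(\hat\lambda^a,\hat\gamma^a,\hat l^a)$ (its $b$- and $\mu$-coordinates are irrelevant here), and the constraint appended to the master is the instance of (\ref{cstr:biGA}) at $h=a$, namely
\[
g(y,\hat\lambda^a,\hat\gamma^a,\hat l^a)=\sum_{(i,j)\in A}\bigl[u_{ij}y_{ij}\hat\lambda^a_{ij}+u_{ij}\hat\gamma^a_{ij}-u_{ij}\hat l^a_{ij}\bigr]\ \geq\ |T|.
\]
Because, as already observed, the extreme-point set $\mathcal H$ of the lower-level polyhedron does not depend on $y$, this is one and the same valid inequality whether we later test it at $\hat y^1$ or at $\hat y^2$, so the whole question is a comparison of its left-hand side at the two points. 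By definition, the statement that adding this constraint makes every solution with $y=\hat y^1$ infeasible is the inequality $g(\hat y^1,\hat\lambda^a,\hat\gamma^a,\hat l^a)<|T|$, and what must be deduced is $g(\hat y^2,\hat\lambda^a,\hat\gamma^a,\hat l^a)<|T|$.

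Second, I would record the monotonicity. For any fixed $(\lambda,\gamma,l)$, the coefficient of $y_{ij}$ in $g(y,\lambda,\gamma,l)$ is $u_{ij}\lambda_{ij}\geq 0$, since $u$ is a positive capacity function and $\lambda\in[0,1]^{|A|}$ on the dual polyhedron $\mathcal D$ (hence $\hat\lambda^a\geq 0$). Therefore $y\mapsto g(y,\lambda,\gamma,l)$ is nondecreasing in each coordinate, and from $\hat y^1\geq\hat y^2$ we obtain
\[
g(\hat y^2,\hat\lambda^a,\hat\gamma^a,\hat l^a)\ \leq\ g(\hat y^1,\hat\lambda^a,\hat\gamma^a,\hat l^a)\ <\ |T|,
\]
the strict inequality being the hypothesis. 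Thus the appended cut is violated at $\hat y^2$ as well, i.e. it also makes every solution with $y=\hat y^2$ infeasible, which is the claim. Note that feasibility of $\hat y^2$ in the relaxed master is not even needed: only $\hat y^1\geq\hat y^2$ and the violation at $\hat y^1$ are used.

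There is no genuinely hard step here; the only point that needs a little care is the identification in the first paragraph, namely checking that the constraint rendering $\hat y^1$ infeasible is literally the inequality (\ref{cstr:biGA}) at the extreme point delivered by the oracle, and that the $y$-independence of $\mathcal H$ lets us re-use exactly that inequality when evaluating at $\hat y^2$. Once this is pinned down, nonnegativity of the capacities and of the dual multipliers $\lambda$ gives the monotonicity of $g$ in $y$, and the two-line chain of inequalities above finishes the proof; the same reasoning shows, more generally, that a cut produced from a coordinatewise-maximal infeasible $y$ dominates those produced from smaller ones.
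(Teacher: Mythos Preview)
Your proposal is correct and follows essentially the same approach as the paper: both arguments reduce the claim to the coordinatewise monotonicity of $g$ in $y$, which holds because the coefficient $u_{ij}\lambda_{ij}$ of each $y_{ij}$ is nonnegative, and then conclude from $\hat y^1\geq\hat y^2$ that $g(\hat y^2,\hat\lambda^a,\hat\gamma^a,\hat l^a)\leq g(\hat y^1,\hat\lambda^a,\hat\gamma^a,\hat l^a)$. Your write-up is somewhat more explicit than the paper's (you spell out the identification of the added cut with an instance of (\ref{cstr:biGA}) and the $y$-independence of $\mathcal H$), but the core idea is identical.
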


  \begin{proof}
  	For any value $(\hat{\lambda}^a, \hat{\gamma}^a, \hat{l}^a)$ of $(\lambda, \gamma, l)$, we have $g(\hat{y}^{1},\hat{\lambda}^a, \hat{\gamma}^a, \hat{l}^a) \geq g(\hat{y}^{2},\hat{\lambda}^a, \hat{\gamma}^a, \hat{l}^a)$ since $\hat{y}^{1} \geq \hat{y}^{2}$ (as $u$ and $\lambda$ are positive). Hence, if $g(\hat{y}^{1},\hat{\lambda}^a, \hat{\gamma}^a, \hat{l}^a) \leq |T| - 1$, then  $g(\hat{y}^{2},\lambda^a,\gamma^a,l^a) \leq |T| - 1$.
  \end{proof}

  To improve the cut obtained by solving $(2LP)$ at each step, we try to inject better values $\hat y$ of variables $y$ in it. To get these values, we first solve the following problem, and then we compute the new $\hat y$ accordingly (as explained later). Given a starting solution $\hat{y}$, we want to find a cut-set in the support network (i.e., in the initial digraph $G$) with a minimum number of arcs such that this cut-set is non-valid in the network induced by the arcs $(i,j)$ such that $\hat{y}_{ij}=1$ (meaning that, if we remove $k$ given arcs of the cut-set, its remaining capacity is smaller than $|T|$). This can be modeled as follows:
  \vspace{-0.3cm}
  \begin{subequations}
  	\begin{empheq}{align}
  	\min  \quad  &  \rlap{$\sum_{(i,j) \in A} \lambda_{ij}$} \notag\\[-4pt]
  	\text{s.t} \quad& \sum_{(i,j) \in A} u_{ij}\hat{y}_{ij}\lambda_{ij}  & \leq & \quad |T| - 1 \label{cstr:enhanceNF}\\[-4pt]
  	& \sum_{(i,j) \in A} \gamma_{ij} & \leq & \quad k & \label{cstr:enhanceK}\\[-4pt]
  	&\ \gamma_{ts} & = & \quad 0 \quad & \forall t \in T \label{cstr:enhanceFic}\\
  	& \omit\rlap{$\quad (\lambda, \mu, \gamma) \in \mathcal{D}, \ \mu \in \{0,1\}^{|V|} $} &&&\notag
  	\end{empheq}
  \end{subequations}

  The variables $(\lambda, \mu, \gamma)$ define a cut as in $(2LP)$ since they belong to $\mathcal{D}$ (recall that $\mathcal{D}$ is the set of constraints (\ref{lambdaGammaCstr})$-$(\ref{3L-varCstr})). However, adding the other constraints makes the constraints matrix not unimodular anymore: thus, we have to set $\mu$ as a 0-1 variable. The constraint (\ref{cstr:enhanceNF}) ensures that the cut-set selected is non-valid (as defined before). The constraint (\ref{cstr:enhanceK}) bounds the number of deleted arcs to at most $k$, while the constraints (\ref{cstr:enhanceFic}) forbid the deletion of fictive arcs. \\

  Then, the new values of the $\hat y_{ij}$'s are computed as follows: we set $\hat y_{ij}$ to 1 for all $(i,j)$ with $\lambda_{ij} = \gamma_{ij} = 0$ and let the others to their current value. It implies that, for each arc $(i,j)$, the new value of $\hat y_{ij}$ cannot be smaller than the old one, and, using Proposition \ref{betterCutProp}, we generate a better constraint than the original one by computing the extreme points associated with this new value of $\hat y$. \\

  In order to obtain a formulation that works for the undirected case, we define for each edge $[i,j]$ the variables $y_{ij}$, $b_{ij}$, $x_{ij}$ and $x_{ji}$. The only modification appears in the polyhedron $\mathcal{X}(y,b)$, which can be modified as follows:

   \[
   \mathcal{X}(y,b)=\left\{
   x \in \mathbb{R}^{|E|} \left|
   \begin{array}{ll}
   \sum\limits_{i \in \Gamma^(j)} x_{ij} -  \sum\limits_{k \in \Gamma^(j)} x_{jk} = 0 \quad &\forall j \in V \setminus \{r,s\}\\
   x_{ij} \quad \leq \quad u_{ij}y_{ij} \ & \forall [i,j] \in E\\
   x_{ji} \quad \leq \quad u_{ij}y_{ij} \ & \forall [i,j] \in E\\
   x_{ij} \quad \leq \quad u_{ij}(1 - b_{ij})  \ & \forall [i,j] \in E\\
   x_{ji} \quad \leq \quad u_{ij}(1 - b_{ij})  \ & \forall [i,j] \in E\\
   x_{ij}, x_{ji} \quad \geq \quad 0 \ & \forall [i,j] \in E
     \end{array}
   \right.
   \right\}
   \]\\

   Again, as this polyhedron is associated with a maximum flow problem (when the values of $y$ and $b$ are fixed), we can always find a maximum flow where either $x_{ij} = 0$ or $x_{ji} = 0$ for each edge $[i,j] \in E$. Once this polyhedron has been modified, one can use the method proposed for the directed case to solve the formulation.

\subsection{Addition of protected arcs}

Let us now define another version of the problem, where we add the possibility of protecting $k'$ arcs. In this version, in addition to $A'$, we also select a subset $A'_p \subset A'$ with $|A'_p| \leq k'$; those arcs are called \emph{protected arcs} and cannot be deleted by the attacker. The corresponding problem is called Capacitated Protected Rooted k-Edge Connected Steiner Network problem (\textbf{CPRkECSN}). In the wind farm application, protecting arcs can be seen as doubling a set of cables under a given budget for example, or protecting cables from a difficult environment (like extreme cold). \\

For each arc $(i,j)$, we define the variable $p_{ij}$ as a binary variable equal to 1 if the arc $(i,j)$ is protected, and to 0 otherwise. We also define the set of values that can be taken by $p$:

\begin{center}
	 $P = \{ \ p \in \{0,1\}^{|A|} \ | \sum\limits_{(i,j) \in A}  p_{ij} \leq k' \ ; \ p_{ij} \ \leq \ y_{ij} \quad \forall (i,j) \in A \ \}$
\end{center}

This set ensures that there are at most $k'$ protected arcs, and that we cannot protect arcs which are not selected in the final network. In the following, we propose small modifications to each one of the previous formulations in order to include the possibility of protecting arcs.

\subsubsection{Cut-set formulation}

In the cut-set formulation, the constraints (\ref{cstr:cutset-MS}) can be replaced by the following ones:

  \begin{subequations}
  	\begin{empheq}{align}
  	\sum\limits_{(i,j) \in S} u_{ij}y_{ij} \ - \ \sum_{(i,j) \in C} u_{ij}(y_{ij} - p_{ij}) \quad \geq \quad |T| \qquad \forall S \in \mathcal{S}, \ \forall C \in C^S_k \label{cstr:cutsetProt-MS}
  	\end{empheq}
  \end{subequations}

  We check that the capacity of each cut-set minus the capacity of $k$ unprotected arcs of this cut-set is always larger than $|T|$. We also add to the cut-set formulation the constraint $p \in P$. We solve the resulting MIP using the same constraints generation algorithm as in Subsection \ref{subsubsec:cut-set}. The separation problem is slightly modified to take into account the fact that the capacity of the protected arcs cannot be removed to compute the residual capacity of the cut-set. For small values of $k$, for each combinations of $k$ selected but non-protected arcs, we compute the min-cut (in Subsection \ref{subsubsec:cut-set}, we take into account all selected arcs). Considering the MIP, we just have to add the constraint $d_{ij} \leq 1 - \hat p_{ij}$ for each arc $(i,j)$ (where $\hat p$ corresponds to the current value of $p$).

  \begin{rmk}
  When arcs can be protected, the case of uniform capacities does not admit a simpler formulation anymore.
  \end{rmk}

\subsubsection{Flow formulation}

In the flow formulation, in addition to the constraint $p \in P$, we can replace the constraints (\ref{cstr:flow-break}) by the following ones:

\begin{subequations}
	\begin{empheq}{align}
	x^{F}_{ij} \quad \leq \quad u_{ij}p_{ij}  \qquad \forall F \in \mathcal{F}, \ \forall (i,j) \in F \label{cstr:flowProt-break}
	\end{empheq}
\end{subequations}

Those constraints ensure that in a scenario $F$ where an arc $(i,j) \in F$, we can route some flow through this arc $(i,j)$ only if this arc is protected. Again, we can use the same columns-and-constraints generation algorithm as in Subsection \ref{subsubsec:flow}, in order to find the most vital arcs in the separation problem among the non-fictive and non-protected arcs (we consider only combinations of selected but non-protected arcs when computing the set of maximum flows).

\subsubsection{Bilevel formulation}

In the bilevel formulation, the only polyhedron that needs to be modified is $\mathcal{X}(y,b)$, which is replaced by the following one, denoted by $\mathcal{X}(y,b,p)$:

  \[
  \mathcal{X}(y,b,p)=\left\{
  x \in \mathbb{R}^{|A|} \left|
  \begin{array}{ll}
  \sum\limits_{i \in \Gamma^-(j)} x_{ij} -  \sum\limits_{k \in \Gamma^+(j)} x_{jk} = 0 \quad &\forall j \in V \setminus \{r,s\}\\
  x_{ij} \quad \leq \quad u_{ij}y_{ij} \ & \forall (i,j) \in A\\
  x_{ij} \quad \leq \quad u_{ij}(1 - b_{ij} + p_{ij})  \ & \forall (i,j) \in A\\
  x_{ij} \quad \geq \quad 0 \ & \forall (i,j) \in A
  \end{array}
  \right.
  \right\}
  \]\\

  The third constraint ensures that, if an arc is protected, then we can route a flow through this arc (respecting the capacities) even if the attacker deletes it. The bilevel formulation for the problem with protected arcs is then:

 \begin{equation*}
    \begin{array}{lrllll}
    &\min\limits_{y \in  \{0,1\}^{|A|}, p \in P} & \rlap{$\sum\limits_{(i,j) \in A} c_{ij}y_{ij} $} \notag\\
    &\text{s.t} \quad & \quad f(y,p) \geq |T| && \label{2L-fYCstr-prot}\notag\\
	& & \qquad\text{where }\quad & f(y,p) \quad = & \min\limits_{b, \lambda, \gamma, \mu, l} & \rlap{$  g_{prot}(y,p,\lambda,\gamma, l)$}\\
	\mathbf{(BPP)} \hspace{2cm} & &  &  & \quad \text{s.t. } \quad& b \in \mathcal{B}  \notag\\
	 	& &  &  \quad& & (\lambda, \mu, \gamma) \in \mathcal{D}  \notag\\
	 	& &  &  \quad&  &l \in \mathcal{L}(b,\gamma) \notag
    \end{array}
    \end{equation*}

where $g_{prot}(y,p,\lambda,\gamma, l) =  \sum_{(i,j) \in A} u_{ij}y_{ij}\lambda_{ij} + u_{ij}\gamma_{ij} - u_{ij}l_{ij} + u_{ij}p_{ij}\gamma_{ij}$. We then use the same decomposition method to solve the formulation. Property \ref{betterCutProp} can be replaced by the following one (using the fact that $u \geq 0$ and $\gamma \geq 0$):

\begin{prop}\label{betterCutPropProt}
  	Let $(\hat{y}^{1},\hat{p}^{1})$ and $(\hat{y}^{2},\hat{p}^{2})$ be two feasible solutions of $\mathbf{(BPP)}$ such that $\hat{y}^{1} \geq \hat{y}^{2}$ and $\hat{p}^{1} \geq \hat{p}^{2}$. If adding a constraint $g_{prot}(y,p,\lambda,\gamma,l) \leq g_{prot}(y,p,\hat{\lambda}^a, \hat{\gamma}^a, \hat{l}^a)$ makes any solution with $(y,p) = (\hat{y}^{1},\hat{p}^{1})$ infeasible, then it also makes any solution with $(y,p) = (\hat{y}^{2},\hat{p}^{2})$ infeasible.
  \end{prop}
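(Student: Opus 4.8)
The plan is to reproduce, almost verbatim, the argument of Proposition~\ref{betterCutProp}; the only adjustment is that the monotonicity exploited there for the variable $y$ alone must now be established jointly for the pair $(y,p)$, using the extra nonnegativity fact $\gamma\ge 0$ flagged in the statement.

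First I would isolate the key monotonicity observation. Fix an extreme point $a\in\mathcal{H}$ and regard the left-hand side of the cut it generates, namely $g_{prot}(y,p,\hat{\lambda}^a,\hat{\gamma}^a,\hat{l}^a)=\sum_{(i,j)\in A}\bigl(u_{ij}\hat{\lambda}^a_{ij}y_{ij}+u_{ij}\hat{\gamma}^a_{ij}-u_{ij}\hat{l}^a_{ij}+u_{ij}\hat{\gamma}^a_{ij}p_{ij}\bigr)$, as a function of $(y,p)$ only. The coefficient of $y_{ij}$ is $u_{ij}\hat{\lambda}^a_{ij}\ge 0$ and the coefficient of $p_{ij}$ is $u_{ij}\hat{\gamma}^a_{ij}\ge 0$, since $u\ge 0$ and since $\hat{\lambda}^a,\hat{\gamma}^a$ are the $\lambda$- and $\gamma$-coordinates of a point of $\mathcal{D}$, hence nonnegative. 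Thus this map is affine with nonnegative coefficients, so it is nondecreasing coordinate-wise in $(y,p)$; in particular $\hat{y}^1\ge\hat{y}^2$ and $\hat{p}^1\ge\hat{p}^2$ give $g_{prot}(\hat{y}^1,\hat{p}^1,\hat{\lambda}^a,\hat{\gamma}^a,\hat{l}^a)\ge g_{prot}(\hat{y}^2,\hat{p}^2,\hat{\lambda}^a,\hat{\gamma}^a,\hat{l}^a)$.

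Then I would conclude exactly as in Proposition~\ref{betterCutProp}. The cut associated with $a$ is $g_{prot}(y,p,\hat{\lambda}^a,\hat{\gamma}^a,\hat{l}^a)\ge|T|$, and it makes every solution with $(y,p)=(\hat{y}^1,\hat{p}^1)$ infeasible precisely when $g_{prot}(\hat{y}^1,\hat{p}^1,\hat{\lambda}^a,\hat{\gamma}^a,\hat{l}^a)<|T|$; since the quantities involved are integers this reads $g_{prot}(\hat{y}^1,\hat{p}^1,\hat{\lambda}^a,\hat{\gamma}^a,\hat{l}^a)\le|T|-1$. Combining with the inequality of the previous paragraph yields $g_{prot}(\hat{y}^2,\hat{p}^2,\hat{\lambda}^a,\hat{\gamma}^a,\hat{l}^a)\le|T|-1<|T|$, so the same cut is violated at $(\hat{y}^2,\hat{p}^2)$, i.e.\ it makes every solution with $(y,p)=(\hat{y}^2,\hat{p}^2)$ infeasible, which is the assertion.

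I do not anticipate a genuine obstacle; the proof is essentially a one-line monotonicity argument. The only mild point of care is the presence of the second bilinear term $u_{ij}p_{ij}\gamma_{ij}$ in $g_{prot}$ (alongside the term $u_{ij}y_{ij}\lambda_{ij}$ already present in the unprotected case): once these products are linearized in the master problem in the classical way, exactly as was done for $b_{ij}\gamma_{ij}$, one should check that the coefficients of $y_{ij}$ and of $p_{ij}$ in the inequality actually posted remain nonnegative, so that the monotonicity step carries over unchanged — which it does, because $u\ge 0$ and $\lambda,\gamma\ge 0$. Everything else is identical to the unprotected case.
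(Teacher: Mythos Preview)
Your argument is correct and is exactly what the paper does: it does not give a separate proof but merely says the proposition follows from the same monotonicity reasoning as Proposition~\ref{betterCutProp}, ``using the fact that $u\ge 0$ and $\gamma\ge 0$''. One small quibble: the step ``since the quantities involved are integers'' is unnecessary and not clearly justified (the coordinates $\hat{\lambda}^a,\hat{\gamma}^a,\hat{l}^a$ come from extreme points of a combined polyhedron that need not be integral); simply carrying the strict inequality $g_{prot}(\hat{y}^1,\hat{p}^1,\cdot)<|T|$ through the monotonicity already gives $g_{prot}(\hat{y}^2,\hat{p}^2,\cdot)<|T|$, which is all you need.
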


The same MIP can be used to enhance the generated constraint, simply by replacing the constraint (\ref{cstr:enhanceNF}) by the following one:

\[\sum_{(i,j) \in A} u_{ij}\hat{y}_{ij}\lambda_{ij} + \hat{p}_{ij}\gamma_{ij} \quad \leq \quad |T| - 1 \]

\subsection{Valid inequalities}\label{subsec:validIneq}

In this section, we propose some valid inequalities to enhance the quality of the lower bound obtained by solving the continuous relaxation.

\begin{subequations}
	\begin{empheq}{align}
	\sum_{(i,t) \in A} y_{it} \quad \geq \quad k + 1 \qquad \forall t \in T \label{VI:termNonProtected}
	\end{empheq}
\end{subequations}

\begin{subequations}
	\begin{empheq}{align}
	\sum_{(r,i) \in A} y_{ri} \quad \geq \quad k + 1 \label{VI:rootNonProtected}
	\end{empheq}
\end{subequations}

Inequalities (\ref{VI:termNonProtected}) ensure that there are at least $k+1$ arcs entering each terminal. Indeed, if there are less than $k+1$ arcs entering it, then it is possible to delete all of them and thus to prevent one unit of flow from reaching the sink. Inequality (\ref{VI:rootNonProtected}) states the same constraint for the root. Those two families of inequalities are only true for the case without protection ($k'=0$). As one arc can be enough if it is protected, we can replace the previous inequalities by (\ref{VI:termProtected}) and (\ref{VI:rootProtected}) in this case.

\begin{subequations}
	\begin{empheq}{align}
	\sum_{(i,t) \in A} y_{it} \quad \geq \quad 1 \qquad \forall t \in T \label{VI:termProtected}
	\end{empheq}
\end{subequations}

\begin{subequations}
	\begin{empheq}{align}
	\sum_{(r,i) \in A} y_{ri} \quad \geq \quad 1 \label{VI:rootProtected}
	\end{empheq}
\end{subequations}

Inequalities (\ref{VI:SteinerIn}) state that, for each Steiner vertex $j$, if an arc entering $j$ is selected, then at most one arc leaving $j$ must be selected. Indeed, in a feasible solution, if there is no arc leaving a Steiner vertex, then all arcs entering it can be deleted without making the solution infeasible (as we assumed the arc costs to be positive, this new solution is at least as good as the previous one). Inequalities (\ref{VI:SteinerOut}) state the same for arcs leaving a Steiner vertex $j$. Those inequalities are valid in both protected and unprotected versions of the problem.

\begin{subequations}
	\begin{empheq}{align}
	y_{ij} \quad \leq \quad \sum_{k \in \Gamma^+(j)} y_{jk} \qquad \forall j \in V \setminus \{T \cup \{r\}\}, \ \forall i \in \Gamma^-(j) \label{VI:SteinerIn}
	\end{empheq}
\end{subequations}

\begin{subequations}
\begin{empheq}{align}
y_{jk} \quad \leq \quad \sum_{i \in \Gamma^-(j)} y_{ij} \qquad \forall j \in V \setminus \{T \cup \{r\}\}, \ \forall k \in \Gamma^+(j) \label{VI:SteinerOut}
\end{empheq}
\end{subequations}

Inequalities (\ref{VI:SteinerNeigh}) relate to the Steiner vertices which are adjacent to a terminal, in the unprotected version of the problem. Terminals must have at least $k+1$ neighbors in a feasible solution. Any given terminal $t \in T$ has a number of neighbors which are terminals equal to $|\Gamma_{G}(t) \cap T|$. Then, there must be at least $\max(0,(k+1)-|\Gamma_{G}(t) \cap T|)$ neighbors of $t$ which are Steiner vertices in a feasible solution. As any Steiner vertex selected in an inclusion-wise minimal solution has at least two incident arcs, we ensure that the number of those arcs (counted with their multiplicity) is larger than two times the number of necessary Steiner vertices adjacent to $t$.

\begin{subequations}
	\begin{empheq}{align}
	\sum_{j \in \Gamma_{G}(t) \cap (V \setminus T)} (\sum_{i \in \Gamma_{G}^-(j)} y_{ij} + \sum_{k \in \Gamma_{G}^+(j)} y_{jk}) \quad \geq \quad 2(k+1-|\Gamma_{G}(t) \cap T|) \qquad \forall t \in T \label{VI:SteinerNeigh}
	\end{empheq}
\end{subequations}

In the case with protected arcs, each vertex $v \in T \cup \{r\}$ must have at least $k+1$ arcs entering it (or leaving it, if $v = r$), except if it has at least one protected arc entering it (or leaving it, if $v = r$), which can happen for at most $k'$ vertices among these $|T|+1$. Hence, we obtain:

\begin{subequations}
	\begin{empheq}{align}
	\sum\limits_{t \in T}\sum\limits_{i \in \Gamma^-_{G}(t)} y_{it} \ + \ \sum\limits_{j \in \Gamma^+_{G}(r)} y_{rj}  \quad \geq \quad (|T|+1-k')(k+1) + k'
	\end{empheq}
\end{subequations}

\section{Results analysis}\label{sec:results}

In this section, we present the results of the three formulations proposed previously. All experiments were performed on a computer with a 2.40GHz Intel(R) Core(TM) i7-5500U CPU and a 16GB RAM, using the solver CPLEX version 12.6.1, interfaced with Julia 0.6.0. We used in particular the package \emph{JuMP}, a tool allowing mathematical modeling. For each test, the algorithm has been stopped after 3000 seconds if it has not terminated yet. Table \ref{tab:parametersSteiner} shows for each instance the number associated ($I$), as well as the number of vertices, terminals, and edges, respectively. The column $opt$ gives the optimal value of the Capacitated Rooted Steiner Network (\textbf{CRSN}) for each instance, which corresponds to the case where $k=0$. All instances have been generated in the following way: the vertices have been generated in the plane, and the capacity of an arc is more likely to be high if this arc is close to the root. The arc capacities are high enough to have a set of feasible solutions to our problems, but low enough to keep the difficulty in those problems. More precisely, the capacities are chosen randomly among four values: $0.8|T|$, $0.6|T|$ and, except for the edges with endpoints at distance 1 or 2 from the root, $0.4|T|$ and $0.2|T|$. Furthermore, the cost of an arc depends on both its length and its capacity, and hence is not necessarily integral.\\

\begin{table}[h!]
	\begin{subtable}[h]{0.30\linewidth}
		\centering
		\scalebox{0.8}{\begin{tabular}{|r|r|r|r|r|}
			\hline
			I & |V| & |T| & |E| & opt  \\
			\hline
			1 & 20 & 2 & 47 & 3.43 \\
			\hline
			2 & 20 & 4 & 46 & 5.16 \\
			\hline
			3 & 20 & 6 & 45 & 7.8 \\
			\hline
			4 & 20 & 12 & 46 & 10.73 \\
			\hline
			5 & 20 & 19 & 46 & 14.14 \\
			\hline
			6 & 25 & 2 & 60 & 4.62 \\
			\hline
			7 & 25 & 5 & 59 & 5.78 \\
			\hline
			8 & 25 & 8 & 61 & 8.11 \\
			\hline
			9 & 25 & 15 & 61 & 10.36 \\
			\hline
			10 & 25 & 24 & 59 & 13.93 \\
			\hline
			11 & 30 & 18 & 74 & 9.45 \\
			\hline
			12 & 30 & 3 & 74 & 3.83 \\
			\hline
		\end{tabular}}
	\end{subtable}
	\hfill
	\begin{subtable}[h]{0.30\linewidth}
		\centering
		\scalebox{0.8}{\begin{tabular}{|r|r|r|r|r|}
			\hline
			I & |V| & |T| & |E| & opt  \\
			\hline
			13 & 30 & 6 & 73 & 10.52 \\
			\hline
			14 & 30 & 9 & 74 & 7.19 \\
			\hline
			15 & 30 & 29 & 74 & 13.85 \\
			\hline
			16 & 35 & 4 & 89 & 3.17 \\
			\hline
			17 & 35 & 7 & 87 & 7.51 \\
			\hline
			18 & 35 & 10 & 91 & 8.95 \\
			\hline
			19 & 35 & 21 & 89 & 11.58 \\
			\hline
			20 & 35 & 34 & 88 & 12.81 \\
			\hline
			21 & 40 & 4 & 104 & 6.0 \\
			\hline
			22 & 40 & 8 & 103 & 8.83 \\
			\hline
			23 & 40 & 12 & 100 & 12.73 \\
			\hline
		\end{tabular}}
	\end{subtable}
	\hfill
	\begin{subtable}[h]{0.30\linewidth}
		\centering
		\scalebox{0.8}{\begin{tabular}{|r|r|r|r|r|}
			\hline
			I & |V| & |T| & |E| & opt  \\
			\hline
			24 & 40 & 24 & 104 & 10.93 \\
			\hline
			25 & 40 & 39 & 103 & 14.95 \\
			\hline
			26 & 45 & 14 & 118 & 10.26 \\
			\hline
			27 & 45 & 27 & 114 & 17.03 \\
			\hline
			28 & 45 & 44 & 119 & 18.74 \\
			\hline
			29 & 45 & 4 & 119 & 3.41 \\
			\hline
			30 & 50 & 5 & 133 & 6.48 \\
			\hline
			31 & 50 & 10 & 133 & 9.75 \\
			\hline
			32 & 50 & 15 & 131 & 8.22 \\
			\hline
			33 & 50 & 30 & 133 & 12.29 \\
			\hline
			34 & 50 & 49 & 130 & 16.05 \\
			\hline
		\end{tabular}}
	\end{subtable}
	\caption{Instance parameters and results of \textbf{CRSN}} \label{tab:parametersSteiner}
\end{table}

\begin{table}[h!]
	\centering
	\scalebox{0.7}{\begin{tabular}{|r|r?r|r|r|r?r|r|r|r?r|r|r|r|}
			\hline
			\multicolumn{2}{|c?}{Parameters} & \multicolumn{4}{c?}{Bilevel} & \multicolumn{4}{c?}{Cut-set} & \multicolumn{4}{c|}{Flow}\\
			\hline
			k & I & $\%_{opt}$ & time(s) & CRG & it & $\%_{opt}$ & time(s) & CRG & it & $\%_{opt}$ & time(s) & CRG & it\\
			\Xhline{1.5pt}  
			1 & $I_1$ & 100.0 & 18.24 & 0.3 & 159 & 100.0 & 31.93 & 0.17 & 91 & 100.0 & 18.42 & 0.17 & 12 \\
			\hline
			- & $I_2 \setminus \{13,19\}$ & 93.33 & 382.11 & 0.26 & 425 & 60.0 & 1396.57 & 0.17 & 389 & - & - & - & - \\
			\hline
			2 & 2,3,6,8 & 100.0 & 6.9 & 0.21 & 72 & 100.0 & 33.9 & 0.08 & 66 & 100.0 & 69.28 & 0.08 & 30 \\
			\hline
			- & 14,18,21,23,25,26 & 100.0 & 209.3 & 0.22 & 428 & 50.0 & 1692.22 & 0.16 & 253 & - & - & - & - \\
			\hline
			3 & 6,8 & 100.0 & 16.33 & 0.17 & 140 & 100.0 & 35.14 & 0.13 & 89 & 100 & 246.45 & 0.13 & 82 \\
			\hline
			- & 18,21 & 100.0 & 199.25 & 0.26 & 418 & 50.0 & 2472.0 & 0.17 & 176 & - & - & - & - \\
			\hline                                                                                  
	\end{tabular}}
	\caption{Results on instances with non-uniform capacities and $k'=0$ } \label{tab:ResultsNonUnifK0}
\end{table}

  Table \ref{tab:ResultsNonUnifK0} shows the results obtained, for $k \in \{1,2,3\}$ and $k'=0$, by the three formulations on those instances with non-uniform capacities. For each value of $k$, we have computed the results on two subsets of instances, one called $I_1$ on which we test the three formulations, one called $I_2$ composed of instances of larger size, on which we test only the bilevel and the cut-set formulations as  the solving time of the flow formulation is too important in that case. The subset $I_1$ is composed of the instances numbered from 1 to 11 while $I_2$ is composed of the ones numbered from 12 to 28. As some instances do not admit feasible solutions for some values of $k$, we remove such instances. Nevertheless, those instances are taken into account in the results when $k' \neq 0$ and in the case of uniform capacities. The column $k$ is the number of arc deletions considered for this subset of instances, while $I$ gives the numbers of the instances tested. For each formulation, $\%_{opt}$ corresponds to the percentage of instances solved to optimality, $time$ shows the mean solving time (in seconds) for this subset, $CRG$ corresponds to the mean gap between the optimal value of the continuous (or linear) relaxation and the optimal value of the problem and $it$ corresponds to the mean value of iterations. As we can see in this table, on small instances and small values of $k$, the three formulations can be competitive. However, as the sizes of the instances and value of $k$ grow, the bilevel formulation tends to be the most efficient one, while the flow formulation seems to be the less effective one, despite the fact that the optimal values of the continuous relaxation of the flow and cut-set formulations yield the same value on this set of instances, and are better than the one of the bilevel formulation in this case. We can also notice that, on those instances, the connectivity requirements have a high impact on the cost of an optimal solution, as this cost raises consequently as $k$ increases.\\

\begin{table}[h!]
	\centering
	\scalebox{0.7}{\begin{tabular}{|r|r|r?r|r|r|r?r|r|r|r?r|r|r|r|}
			\hline
			\multicolumn{3}{|c?}{Parameters} & \multicolumn{4}{c?}{Bilevel} & \multicolumn{4}{c?}{Cut-set} & \multicolumn{4}{c|}{Flow}\\
			\hline
			k & k' & I & $\%_{opt}$ & time(s) & CRG & it & $\%_{opt}$ & time(s) & CRG & it & $\%_{opt}$ & time(s) & CRG & it\\
			\Xhline{1.5pt}  
			1 & 1 & $I_1$ & 100.0 & 56.62 & 0.41 & 251 & 90.0 & 361.59 & 0.27 & 122 & 90.0 & 746.62 & 0.32 & 22 \\
			\hline
			- & - & $I_2$& 87.5 & 659.4 & 0.44 & 580 & 18.75 & 2556.11 & 0.32 & 411 & - & - & - & - \\
			\hline
			1 & 2 & $I_1$ & 100.0 & 62.17 & 0.42 & 274 & 90.91 & 436.87 & 0.28 & 130 & 63.64 & 1199.01 & 0.35 & 27 \\
			\hline
			- & - & $I_2$& 81.25 & 843.08 & 0.45 & 638 & 18.75 & 2594.5 & 0.32 & 420 & - & - & - & - \\
			\hline
			1 & 3 & $I_1$& 100.0 & 96.23 & 0.41 & 301 & 90.0 & 677.15 & 0.28 & 123 & 70.0 & 1241.38 & 0.35 & 27 \\
			\hline
			- & - & $I_2$& 82.35 & 909.16 & 0.45 & 694 & 17.65 & 2551.54 & 0.32 & 434 & - & - & - & - \\
			\hline
			2 & 1 & $I_1 \setminus \{1,4,7,9,10\}$& 100.0 & 40.48 & 0.39 & 216 & 83.33 & 760.57 & 0.27 & 96 & 66.67 & 1343.0 & 0.32 & 102 \\
			\hline
			- & - & $I_2 \setminus \{13,19,28\}$& 85.71 & 715.5 & 0.4 & 634 & 21.43 & 2482.98 & 0.3 & 296 & - & - & - & - \\
			\hline
			2 & 2 & $I_1 \setminus \{9,10\}$& 100.0 & 62.6 & 0.37 & 253 & 77.78 & 700.6 & 0.26 & 96 & 66.67 & 1102.47 & 0.36 & 97 \\
			\hline
			- & - & $I_2 \setminus \{19,28\}$& 80.0 & 1146.17 & 0.43 & 730 & 13.33 & 2623.81 & 0.32 & 290 & - & - & - & - \\
			\hline
			2 & 3 & $I_1 \setminus \{9\}$& 100.0 & 54.54 & 0.39 & 251 & 88.89 & 416.84 & 0.28 & 97 & 66.67 & 1488.62 & 0.4 & 116 \\
			\hline
			- & - & $I_2 \setminus \{28\}$& 68.75 & 1548.31 & 0.46 & 915 & 12.5 & 2705.04 & 0.35 & 292 & - & - & - & - \\
			\hline
			3 & 1 & 6,8 & 100.0 & 12.1 & 0.44 & 88 & 100.0 & 81.8 & 0.2 & 97 & 100.0 & 304.6 & 0.2 & 29 \\
			\hline
			- & - & 16,18,21,26& 75.0 & 841.42 & 0.37 & 516 & 50.0 & 1634.7 & 0.28 & 116 & - & - & - & - \\
			\hline
			3 & 2 & 2,3,6,8 & 100.0 & 16.97 & 0.46 & 170 & 100.0 & 183.33 & 0.26 & 102 & 66.67 & 1122.27 & 0.33 & 144 \\
			\hline
			- & - & 12,14,16,17,18,21,22,23,26& 100.0 & 462.28 & 0.44 & 576 & 12.5 & 2792.88 & 0.32 & 244 & - & - & - & - \\
			\hline
			3 & 3 & 2,3,5,6,8& 100.0 & 24.02 & 0.26 & 203 & 75.0 & 834.25 & 0.25 & 103 & 75.0 & 1363.18 & 0.4 & 172 \\
			\hline
			- & - & $I_2 \setminus \{15,19,20,24,25,28\}$& 90.0 & 877.32 & 0.49 & 764 & 20.0 & 2553.8 & 0.37 & 183 & - & - & - & - \\
			\hline                                                                                      
	\end{tabular}}
	\caption{Results on instances with non-uniform capacities and $k' \in \{1,2,3\}$} \label{tab:ResultsNonUnifKGEN}
\end{table}

Table \ref{tab:ResultsNonUnifKGEN}  shows the results in a similar way than in Table \ref{tab:ResultsNonUnifK0}, but in the case where the number $k'$ of arcs that can be protected is between 1 and 3. In this case, one can notice that the optimal value of the continuous relaxation of the cut-set formulation do not yields the same value that the flow  formulation one. However, on those instances, the optimal value of the continuous relaxation of the cut-set formulation yield values at least as good as the two other ones. However, as in the previous case, the bilevel formulation seems to be the most efficient one as the sizes of the instances grow. The results obtained with the three formulations also confirm that the solving time of an instance is sensitive to the value of $k'$: when this parameter grows, the solving time significantly increases.\\

\begin{figure}[h!]
	\centering
	\captionsetup{justification=centering}
	\begin{tikzpicture}
	\begin{axis}[
	width=0.6\textwidth,
	xlabel= Number of the instance,
	ylabel={Time (s)},
	xticklabels={},
	xmin=2, xmax=180,
	ymin=0, ymax=3105,
	legend pos=north west,
	ymajorgrids=true,
	grid style=dashed,
	]
	
	\addplot[
	color=black,
	densely dashed
	]
	coordinates { (3,1.3)  (4,2.5)  (5,2.3)  (6,3.5)  (7,4.2)  (8,4.7)  (9,4.3)  (10,1.5)  (11,2.3)  (12,5.6)  (13,6.1)  (14,3.3)  (15,4.2)  (16,18.5)  (17,31.1)  (18,4.2)  (19,18.9)  (20,3.7)  (21,5.7)  (22,4.3)  (23,19.4)  (24,27.9)  (25,17.4)  (26,20.1)  (27,21.4)  (28,14.4)  (29,7.7)  (30,29.1)  (31,47.4)  (32,3.4)  (33,5.5)  (34,66.1)  (35,11.1)  (36,8.7)  (37,37.9)  (38,45.2)  (39,32.8)  (40,13.8)  (41,95.8)  (42,6.6)  (43,15.1)  (44,8.6)  (45,39.7)  (46,12.1)  (47,138.2)  (48,10.2)  (49,41.9)  (50,45.5)  (51,98.3)  (52,12.2)  (53,192.4)  (54,49.0)  (55,22.2)  (56,15.0)  (57,11.3)  (58,14.8)  (59,66.3)  (60,14.6)  (61,49.3)  (62,96.3)  (63,76.2)  (64,24.7)  (65,242.2)  (66,100.9)  (67,85.8)  (68,29.7)  (69,102.6)  (70,47.0)  (71,249.0)  (72,66.9)  (73,98.2)  (74,87.5)  (75,61.3)  (76,89.5)  (77,29.8)  (78,104.7)  (79,274.8)  (80,172.2)  (81,135.4)  (82,536.4)  (83,147.2)  (84,207.4)  (85,4.8)  (86,17.8)  (87,38.2)  (88,274.2)  (89,348.5)  (90,131.1)  (91,579.3)  (92,130.8)  (93,63.1)  (94,126.6)  (95,229.7)  (96,152.4)  (97,354.0)  (98,622.2)  (99,102.5)  (100,315.9)  (101,333.4)  (102,660.4)  (103,975.2)  (104,252.3)  (105,566.6)  (106,562.0)  (107,720.9)  (108,1130.0)  (109,259.3)  (110,1084.0)  (111,1925.0)  (112,318.5)  (113,1357.0)  (114,196.4)  (115,300.8)  (116,229.9)  (117,347.4)  (118,1080.0)  (119,2041.0)  (120,180.3)  (121,352.0)  (122,1146.0)  (123,88.9)  (124,153.0)  (125,199.2)  (126,110.8)  (127,83.4)  (128,72.4)  (129,137.5)  (130,138.9)  (131,335.1)  (132,103.1)  (133,146.0)  (134,752.5)  (135,1366.0)  (136,2716.0)  (137,111.2)  (138,218.9)  (139,375.1)  (140,105.0)  (141,253.3)  (142,832.6)  (143,85.1)  (144,3000.0)  (145,3000.0)  (146,3000.0)  (147,3000.0)  (148,3000.0)  (149,3000.0)  (150,2206.0)  (151,3000.0)  (152,1961.0)  (153,3000.0)  (154,3000.0)  (155,625.4)  (156,3000.0)  (157,3000.0)  (158,333.6)  (159,778.0)  (160,1415.0)  (161,315.9)  (162,1170.0)  (163,3000.0)  (164,619.8)  (165,1086.0)  (166,1009.0)  (167,1234.0)  (168,2333.0)  (169,3000.0)  (170,160.7)  (171,569.3)  (172,1082.0)  (173,3000.0)  (174,3000.0)  (175,3000.0)  (176,3000.0)  (177,3000.0)  (178,3000.0)  (179,1253.0)  };
	\addlegendentry{BF}
	
	\addplot[
	color=blue,
	]
	coordinates {  (3,0.4)  (4,0.9)  (5,1.0)  (6,1.1)  (7,1.1)  (8,1.1)  (9,1.4)  (10,1.5)  (11,1.7)  (12,2.1)  (13,2.1)  (14,2.2)  (15,2.6)  (16,2.9)  (17,3.6)  (18,4.4)  (19,4.7)  (20,5.4)  (21,5.8)  (22,6.2)  (23,6.8)  (24,6.8)  (25,8.8)  (26,10.4)  (27,10.8)  (28,11.4)  (29,11.7)  (30,11.8)  (31,14.2)  (32,16.9)  (33,17.5)  (34,19.5)  (35,26.9)  (36,27.0)  (37,32.9)  (38,45.5)  (39,49.9)  (40,55.4)  (41,61.4)  (42,63.1)  (43,63.6)  (44,71.3)  (45,76.8)  (46,81.8)  (47,92.2)  (48,101.0)  (49,123.1)  (50,132.9)  (51,144.1)  (52,186.1)  (53,208.0)  (54,228.6)  (55,252.1)  (56,284.2)  (57,289.1)  (58,293.6)  (59,300.9)  (60,426.3)  (61,428.2)  (62,433.9)  (63,500.2)  (64,539.8)  (65,638.2)  (66,724.7)  (67,730.6)  (68,854.4)  (69,1317.0)  (70,1343.0)  (71,1372.0)  (72,1470.0)  (73,1539.0)  (74,1895.0)  (75,2179.0)  (76,2508.0)  (77,3000.0)  (78,3000.0)  (79,3000.0)  (80,3000.0)  (81,3000.0)  (82,3000.0)  (83,3000.0)  (84,3000.0)  (85,3000.0)  (86,3000.0)  (87,3000.0)  (88,3000.0)  (89,3000.0)  (90,3000.0)  (91,3000.0)  (92,3000.0)  (93,3000.0)  (94,3000.0)  (95,3000.0)  (96,3000.0)  (97,3000.0)  (98,3000.0)  (99,3000.0)  (100,3000.0)  (101,3000.0)  (102,3000.0)  (103,3000.0)  (104,3000.0)  (105,3000.0)  (106,3000.0)  (107,3000.0)  (108,3000.0)  (109,3000.0)  (110,3000.0)  (111,3000.0)  (112,3000.0)  (113,3000.0)  (114,3000.0)  (115,3000.0)  (116,3000.0)  (117,3000.0)  (118,3000.0)  (119,3000.0)  (120,3000.0)  (121,3000.0)  (122,3000.0)  (123,3000.0)  (124,3000.0)  (125,3000.0)  (126,3000.0)  (127,3000.0)  (128,3000.0)  (129,3000.0)  (130,3000.0)  (131,3000.0)  (132,3000.0)  (133,3000.0)  (134,3000.0)  (135,3000.0)  (136,3000.0)  (137,3000.0)  (138,3000.0)  (139,3000.0)  (140,3000.0)  (141,3000.0)  (142,3000.0)  (143,3000.0)  (144,3000.0)  (145,3000.0)  (146,3000.0)  (147,3000.0)  (148,3000.0)  (149,3000.0)  (150,3000.0)  (151,3000.0)  (152,3000.0)  (153,3000.0)  (154,3000.0)  (155,3000.0)  (156,3000.0)  (157,3000.0)  (158,3000.0)  (159,3000.0)  (160,3000.0)  (161,3000.0)  (162,3000.0)  (163,3000.0)  (164,3000.0)  (165,3000.0)  (166,3000.0)  (167,3000.0)  (168,3000.0)  (169,3000.0)  (170,3000.0)  (171,3000.0)  (172,3000.0)  (173,3000.0)  (174,3000.0)  (175,3000.0)  (176,3000.0)  (177,3000.0)  (178,3000.0)  (179,3000.0)  };
	\addlegendentry{CF}
	
	\addplot[
	color=red,
	dotted
	]
	coordinates { (3,0.1)  (4,0.1)  (5,1.2)  (6,0.1)  (7,1.1)  (8,1.0)  (9,1.0)  (10,0.3)  (11,1.0)  (12,1.0)  (13,1.0)  (14,1.4)  (15,1.0)  (16,966.2)  (17,70.2)  (18,6.2)  (19,6.7)  (20,2.8)  (21,7.5)  (22,5.2)  (23,22.7)  (24,1831.0)  (25,313.4)  (26,14.7)  (28,19.7)  (29,19.2)  (30,199.3)  (31,1325.0)  (32,13.2)  (33,6.6)  (34,164.6)  (35,10.4)  (36,8.3)  (37,3000.0)  (38,3000.0)  (39,1243.0)  (40,218.0)  (41,295.3)  (42,8.5)   (44,208.6)  (45,560.0)  (46,304.6)  (47,958.8)  (48,77.0)  (49,315.1)  (50,3000.0)  (51,2531.0)  (52,309.3)  (53,1192.0)  (54,2760.0)  (55,3000.0)  (56,243.5)  (57,53.4)  (59,3000.0)  (61,853.7)  (63,3000.0)  (65,3000.0)  (66,3000.0)  (67,3000.0)   (72,3000.0)   (75,2422.0)  (77,3000.0)  (78,3000.0)  (79,3000.0)  (80,3000.0)  (81,3000.0)  (82,3000.0)  (83,3000.0)  (84,3000.0)  };
	
	\addlegendentry{FF}

	\end{axis}
	\end{tikzpicture}
	\centering \caption{Solving time of the different instances regarding the three formulations} \label{fig:solvingTime}
\end{figure}
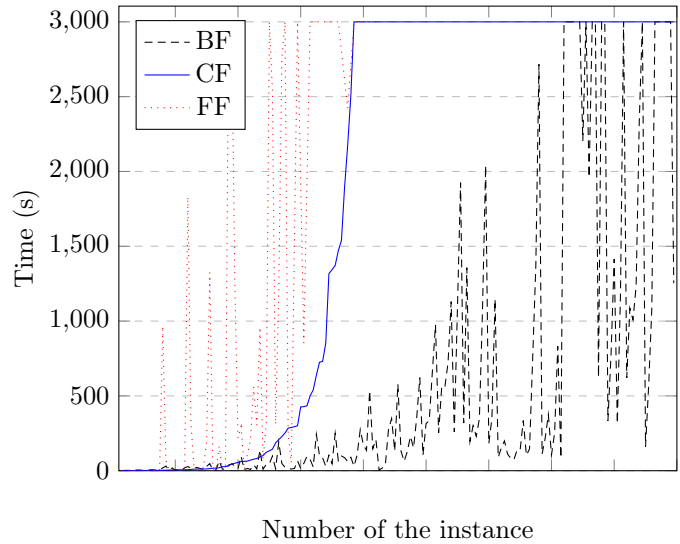

\pgfplotstableread{
	1 0.32083333333333336 0.0 0.47187499999999993 0.0
	2 0.20915712799167532 0.1519250780437044 0.42873176206509545 0.27946127946127947
	3 0.40813464235624125 0.2531556802244039 0.5291970802919708 0.35474452554744523
	4 0.4149808638600328 0.2772006560962274 0.48311390955924444 0.363480251860332
	5 0.2147774208352455 0.17806333180357958 0.4364326375711574 0.3932637571157495
	6 0.23024830699774257 0.0 0.26781326781326786 0.11302211302211311
	7 0.3982300884955752 0.18905872888173772 0.5114573785517873 0.19706691109074248
	8 0.20252277505255772 0.15977575332866148 0.36604361370716504 0.2982866043613707
	9 0.3074285714285715 0.2365714285714286 0.4443091905051734 0.3432744978697505
	10 0.34040047114252064 0.0977620730270907 0.44354838709677424 0.13844086021505378
	11 0.2706164931945557 0.20256204963971172 0.376271186440678 0.30762711864406783
	12 0.26280487804878044 0.21341463414634138 0.49307844429795644 0.4027686222808174
	13 0.20387409200968512 0.16513317191283294 0.403 0.36849999999999994
	14 0.16412859560067677 0.07106598984771573 0.36346516007532953 0.22033898305084745
	15 0.3957894736842106 0.2821052631578947 0.5253595760787283 0.38077214231642703
	16 0.3434473854099419 0.24919302775984503 0.4485094850948509 0.3523035230352303
	17 0.14285714285714277 0.09854014598540149 0.4194798007747648 0.3486441615938019
	18 0.33420822397200345 0.2230971128608923 0.4190207156308851 0.2853107344632768
	19 0.31301939058171746 0.17174515235457055 0.4599427753934192 0.33261802575107297
	20 0.3977955911823648 0.2600200400801604 0.5138821237213833 0.38723818801753535
	21 0.22520420070011674 0.15227537922987172 0.435912938331318 0.375453446191052
	22 0.10611246943765272 0.07775061124694375 0.32640648758236196 0.2919412062848454
	23 0.27146311970979436 0.1904474002418379 0.42793650793650795 0.34476190476190477
	24 0.3015297906602254 0.2403381642512077 0.4800619434765776 0.3987611304684475
}\datatable

\makeatletter
\begin{figure}[h!]
	\begin{tikzpicture}
	
	\begin{axis}[
	width=0.7\textwidth,
	ylabel= Gap,
	xlabel= Number of the instance,
	xtick=data,
	xticklabels={1,2,3,4,5,6,7,8,9,11,13,14,15,16,17,18,20,21,22,23,24,25,28,29,30},
	ymin=0,ymax=0.56,
	legend style={
		font=\footnotesize,
		cells={anchor=west},
		legend columns=5,
		at={(0.65,0.11)},
		anchor=north,
		xticklabel style = {font=\tiny,yshift=0.5ex}
	},
	]
	\addplot[color=red, mark=square, dashed] table[x index=0,y index=1] \datatable;
	\addplot[color=black,mark=o, dotted] table[x index=0,y index=2] \datatable;
	\addplot[color=brown,mark=square] table[x index=0,y index=3] \datatable;
	\addplot[color=blue,mark=o] table[x index=0,y index=4] \datatable;
	\legend{$gap_B^0$,$gap_C^0$,$gap_B^1$,$gap_C^1$}
	\end{axis}
	\end{tikzpicture}
	\centering \caption{ Gap between the optimal values of the bilevel and cut-set continuous relaxations and the optimal value of the problem for the different instances when $k=1$ and $k' \in\{0,1\}$} \label{fig:gap}
\end{figure}
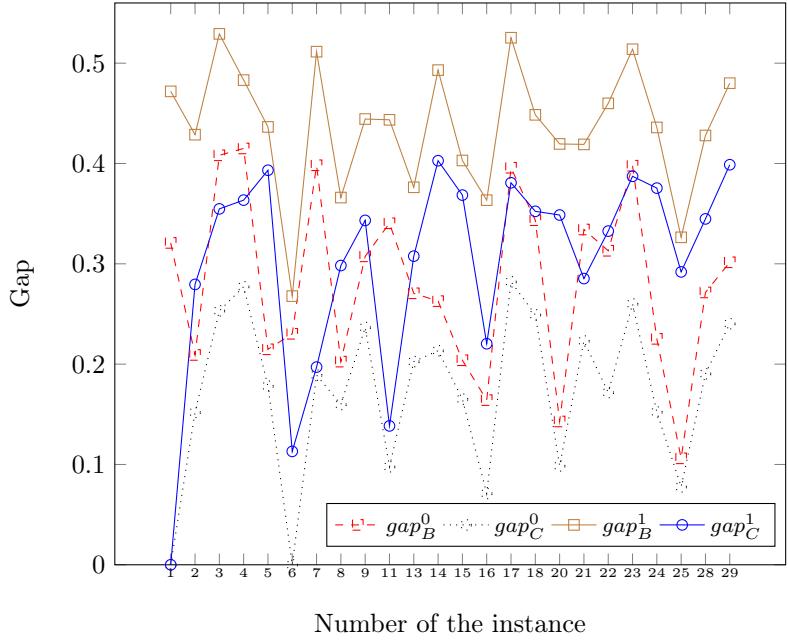

\begin{table}[h!]
	\begin{subtable}[t]{1\linewidth}
		\centering
		\scalebox{0.71}{\begin{tabular}{|r|r|r|r?r|r|r?r|r|r?r|r|}
				\hline
				\multicolumn{4}{|c?}{Parameters} & \multicolumn{3}{c?}{Bilevel} & \multicolumn{3}{c?}{Cut-set} & \multicolumn{2}{c|}{Flow} \\
				\hline
				I & U & k & opt & CR & time (s) & it & CR & time (s) & it & time (s) & it \\
				\Xhline{1.5pt}
				2  &  2  &  1  &  8.63  &  \cellcolor{lightgray} 7.47  &  0.4  &  12  &  7.41  &  0.2  &  26  &  \cellcolor{lightgray} 0.1  &  2\\
				\hline
				-  &  2  &  2  &  14.87  &  \cellcolor{lightgray} 12.29  &  1.3  &  27  &  11.95  &  \cellcolor{lightgray} 0.6  &  52  &  1.2  &  6\\
				\hline
				-  &  3  &  1  &  7.99  &  6.97  &  0.2  &  6  &  \cellcolor{lightgray} 6.99  &  0.1  &  9  &  \cellcolor{lightgray} 0.1  &  2\\
				\hline
				-  &  3  &  2  &  13.17  &  11.41  &  1.0  &  21  &  \cellcolor{lightgray} 11.41  &  \cellcolor{lightgray} 0.2  &  13  &  2.0  &  11\\
				\hline
				3  &  4  &  1  &  9.89  &  \cellcolor{lightgray} 6.65  &  0.9  &  24  &  6.13  &  \cellcolor{lightgray} 0.2  &  20  &  0.7  &  19\\
				\hline
				-  &  4  &  2  &  16.5  &  \cellcolor{lightgray} 13.62  &  2.0  &  44  &  13.52  &  \cellcolor{lightgray} 0.1  &  6  &  8.6  &  34\\
				\hline
				-  &  5  &  1  &  9.89  &  \cellcolor{lightgray} 6.32  &  0.9  &  23  &  5.86  &  \cellcolor{lightgray} 0.2  &  18  &  1.2  &  17\\
				\hline
				-  &  5  &  2  &  16.5  &  \cellcolor{lightgray} 13.44  &  2.8  &  55  &  13.19  &  \cellcolor{lightgray} 0.1  &  6  &  14.5  &  41\\ 
				\hline                                                                                        
				8  &  5  &  1  &  12.76  &  11.05  &  1.1  &  22  &  \cellcolor{lightgray} 11.18  &  \cellcolor{lightgray} 0.4  &  24  &  1.1  &  11\\
				\hline
				-  &  5  &  2  &  20.5  &  17.88  &  5.3  &  75  &  \cellcolor{lightgray} 18.21  &  \cellcolor{lightgray} 0.5  &  30  &  76.6  &  42\\
				\hline
				-  &  5  &  3  &  30.56  &  28.14  &  4.8  &  48  &  \cellcolor{lightgray} 28.32  &  \cellcolor{lightgray} 0.3  &  14  &  771.6  &  59\\
				\hline
				-  &  6  &  1  &  12.67  &  10.87  &  1.1  &  22  &  \cellcolor{lightgray} 10.94  &  \cellcolor{lightgray} 0.4  &  23  &  1.7  &  9\\
				\hline
				-  &  6  &  2  &  20.5  &  17.86  &  8.3  &  110  &  \cellcolor{lightgray} 17.97  &  \cellcolor{lightgray} 0.5  &  32  &  137.9  &  64\\
				\hline
				-  &  6  &  3  &  30.56  &  27.9  &  10.0  &  90  &  \cellcolor{lightgray} 28.04  &  \cellcolor{lightgray} 0.2  &  12  &  1548  &  50\\ 
				\hline
		\end{tabular}}
	\end{subtable}%
	\vspace{5pt}
	\begin{subtable}[t]{0.55\linewidth}
		\centering
		\scalebox{0.7}{\begin{tabular}{|r|r|r|r?r|r|r?r|r|r|}
				\hline
				\multicolumn{4}{|c?}{Parameters} & \multicolumn{3}{c?}{Bilevel} & \multicolumn{3}{c|}{Cut-set} \\
				\hline
				I & U & k & opt & CR & time (s) & it & CR & time (s) & it \\
				\Xhline{1.5pt}
				22  &  5  &  1  &  11.8  &  \cellcolor{lightgray} 9.44  &  \cellcolor{lightgray} 3.6  &  30  &  9.12  &  19.2  &  470  \\
				\hline
				-  &  5  &  2  &  19.56  &  15.88  &  \cellcolor{lightgray} 12.7  &  67  &  \cellcolor{lightgray} 16.0  &  16.2  &  356  \\
				\hline
				-  &  6  &  1  &  11.31  &  \cellcolor{lightgray} 9.2  &  \cellcolor{lightgray} 3.8  &  38  &  8.92  &  7.2  &  191  \\
				\hline
				-  &  6  &  2  &  19.56  &  15.71  &  24.8  &  125  &  \cellcolor{lightgray} 15.8  &  \cellcolor{lightgray} 13.8  &  301  \\
				\hline
				23  &  7  &  1  &  13.43  &  \cellcolor{lightgray} 11.44  &  11.8  &  82  &  11.19  &  \cellcolor{lightgray} 7.1  &  198  \\
				\hline
				-  &  7  &  2  &  22.45  &  18.79  &  53.9  &  267  &  \cellcolor{lightgray} 19.47  &  \cellcolor{lightgray} 12.6  &  254  \\
				\hline
				-  &  10  &  1  &  12.85  &  \cellcolor{lightgray} 10.37  &  6.6  &  48  &  10.33  &  \cellcolor{lightgray} 6.0  &  169  \\
				\hline
				-  &  10  &  2  &  22.45  &  18.39  &  36.4  &  195  &  \cellcolor{lightgray} 18.6  &  \cellcolor{lightgray} 11.6  &  211  \\
				\hline
				24  &  14  &  1  &  14.89  &  \cellcolor{lightgray} 12.91  &  15.6  &  97  &  12.64  &  \cellcolor{lightgray} 4.1  &  90  \\
				\hline
				-  &  14  &  2  &  27.19  &  25.73  &  15.4  &  64  &  \cellcolor{lightgray} 25.85  &  \cellcolor{lightgray} 2.2  &  39  \\
				\hline
				-  &  19  &  1  &  14.36  &  \cellcolor{lightgray} 12.55  &  5.3  &  35  &  12.5  &  \cellcolor{lightgray} 2.8  &  68  \\
				\hline
				-  &  19  &  2  &  27.15  &  25.62  &  7.9  &  34  &  \cellcolor{lightgray} 25.68  &  \cellcolor{lightgray} 1.3  &  23  \\
				\hline                                                                          
				29  &  2  &  1  &  6.96  &  6.25  &  3.9  &  31  &  \cellcolor{lightgray} 6.34  &  \cellcolor{lightgray} 2.1  &  41  \\
				\hline
				-  &  2  &  2  &  12.08  &  \cellcolor{lightgray} 10.19  &  15.1  &  103  &  10.18  &  \cellcolor{lightgray} 4.4  &  88  \\
				\hline
				-  &  2  &  3  &  17.45  &  \cellcolor{lightgray} 15.66  &  12.0  &  69  &  15.57  &  \cellcolor{lightgray} 4.9  &  105  \\
				\hline
				-  &  3  &  1  &  6.62  &  6.05  &  2.9  &  22  &  \cellcolor{lightgray} 6.12  &  \cellcolor{lightgray} 0.1  &  2  \\
				\hline
				-  &  3  &  2  &  12.08  &  \cellcolor{lightgray} 9.93  &  20.7  &  116  &  9.9  &  \cellcolor{lightgray} 4.9  &  99  \\
				\hline
				-  &  3  &  3  &  17.45  &  \cellcolor{lightgray} 15.42  &  11.9  &  66  &  15.29  &  \cellcolor{lightgray} 2.9  &  54  \\
				\hline
		\end{tabular}}
	\end{subtable}%
	\begin{subtable}[t]{0.55\linewidth}
		\centering
		\scalebox{0.7}{\begin{tabular}{|r|r|r|r?r|r|r?r|r|r|}
				\hline
				\multicolumn{4}{|c?}{Parameters} & \multicolumn{3}{c?}{Bilevel} & \multicolumn{3}{c|}{Cut-set} \\
				\hline
				I & U & k & opt & CR & time (s) & it & CR & time (s) & it \\
				\Xhline{1.5pt}                                                                                        30&  3  &  1  &  8.82  &  6.08  &  12.6  &  70  & \cellcolor{lightgray} 5.81  & \cellcolor{lightgray}  11.5  &  1220  \\
				\hline
				-  &  3  &  2  &  15.18  &  \cellcolor{lightgray} 10.01  &  100.0  &  258  &  9.97  &  \cellcolor{lightgray} 88.7  &  739  \\
				\hline
				-  &  4  &  1  &  8.82  &  5.49  &  \cellcolor{lightgray} 17.6  &  56  &  \cellcolor{lightgray} 5.49  &  22.4  &  331  \\
				\hline
				-  &  4  &  2  &  15.18  &  \cellcolor{lightgray} 9.67  &  \cellcolor{lightgray} 114.9  &  243  &  9.51  &  166.4  &  799  \\
				\hline
				31  &  6  &  1  &  13.37  &  \cellcolor{lightgray} 11.0  &  \cellcolor{lightgray} 60.1  &  204  &  10.71  &  130.8  &  1031  \\
				\hline
				-  &  6  &  2  &  21.6  &  \cellcolor{lightgray} 18.72  &  120.4  &  322  &  18.58  &  \cellcolor{lightgray} 14.9  &  195  \\
				\hline
				-  &  8  &  1  &  12.38  &  \cellcolor{lightgray} 10.6  &  19.9  &  94  &  10.48  &  \cellcolor{lightgray} 13.5  &  206  \\
				\hline
				-  &  8  &  2  &  21.41  &  \cellcolor{lightgray} 18.45  &  107.9  &  306  &  18.28  &  \cellcolor{lightgray} 21.2  &  274  \\
				\hline
				32  &  9  &  1  &  11.55  &  8.36  &  \cellcolor{lightgray} 11.7  &  56  &  \cellcolor{lightgray} 8.36  &  16.5  &  231  \\
				\hline
				-  &  9  &  2  &  21.6  &  \cellcolor{lightgray} 16.66  &  126.5  &  264  &  16.62  &  \cellcolor{lightgray} 23.0  &  254  \\
				\hline
				-  &  12  &  1  &  11.5  &  \cellcolor{lightgray} 8.25  &  12.4  &  59  &  8.19  &  \cellcolor{lightgray} 9.5  &  139  \\
				\hline
				-  &  12  &  2  &  21.34  &  \cellcolor{lightgray} 16.52  &  79.8  &  194  &  16.43  &  \cellcolor{lightgray} 14.2  &  158  \\
				\hline
				33  &  18  &  1  &  15.21  &  14.83  &  5.8  &  25  &  \cellcolor{lightgray} 14.84  &  \cellcolor{lightgray} 0.6  &  8  \\
				\hline
				-  &  18  &  2  &  26.64  &  26.02  &  4.7  &  14  &  \cellcolor{lightgray} 26.15  &  \cellcolor{lightgray} 0.8  &  9  \\
				\hline
				-  &  24  &  1  &  15.21  &  14.76  &  5.1  &  24  &  \cellcolor{lightgray} 14.76  &  \cellcolor{lightgray} 0.7  &  8  \\
				\hline
				-  &  24  &  2  &  26.64  &  25.93  &  4.5  &  13  &  \cellcolor{lightgray} 26.01  &  \cellcolor{lightgray} 0.8  &  9  \\
				\hline
				34  &  29  &  1  &  19.02  &  \cellcolor{lightgray} 17.64  &  12.0  &  41  &  17.53  &  \cellcolor{lightgray} 7.7  &  82  \\
				\hline
				-  &  39  &  1  &  19.02  &  \cellcolor{lightgray} 17.33  &  8.9  &  30  &  17.28  &  \cellcolor{lightgray} 4.7  &  55  \\
				\hline
		\end{tabular}}
	\end{subtable}
	\caption{Results on instances with uniform capacities and $k'=0$} \label{tab:ResultsUniK0}
\end{table}

In Figure \ref{fig:solvingTime}, we can see the solving time for the bilevel, the cut-set and the flow formulations (respectively BF, CF, and FF in the legend). Figures \ref{fig:solvingTime} and \ref{fig:gap} present the results obtained on instances numbered from 1 to 29. The solving time is still bounded by 3000 seconds (when the solving time is equal to 3000 in this figure, it means that the instance has not been solved at the end of the allocated time). It shows that the bilevel formulation is more resilient to the growth of the instances size, whereas the flow formulation seems to be the more sensitive one. \\

Figure \ref{fig:gap} shows the evolution of the gap between the optimal values of the different continuous relaxations and the optimal value of the problem on each instance, for $k=1$ and $k' \in \{0,1\}$. In the legend, $gap_B^0$ (respectively $gap_C^0$) corresponds to the gap (in percentage) between the optimal value of the continuous relaxation of the bilevel (respectively cut-set) formulation and the optimal value of the problem when $k'=0$, whereas $gap_B^1$ (respectively $gap_C^1$) deals with the case where $k'=1$. First, we can see that the gap for the cut-set formulation is better than the one for the bilevel formulation in both cases. Second, one can notice that the addition of protected arcs greatly deteriorates the optimal value of the continuous relaxation in both cases, as the gap increases significantly. This could explain why the different formulations are sensitive to the value of $k'$.\\

Table \ref{tab:ResultsUniK0} gives the results for the three formulations in the case of uniform capacities. Each instance presented in Table \ref{tab:parametersSteiner} has been tested with two different uniform capacities (as previously, those capacities have been chosen according to the number of terminals, and are equal to $0.8|T|$ and $0.6|T|$). According to these results, the reformulation of the cut-set formulation when capacities are uniform is particularly interesting, as this formulation does seem to be the best one for this particular case. Furthermore, the solving time is highly reduced in comparison with the non-uniform case. One can also notice that, in this case, the best continuous relaxation is not always obtained with the cut-set formulation, as the bilevel formulation often produces one of higher quality.\\

 We also compute the results obtained by the three formulations, without the addition of the valid inequalities proposed in \ref{subsec:validIneq}, on a subset of instances with $k \in \{1,2,3\}$, $k'=0$, and non-uniform capacities. Let $\overline \Delta_{time}^B$ (respectively $\overline \Delta_{time}^C$ and $\overline \Delta_{time}^F$) be the mean augmentation of the solving time when these valid inequalities are removed from the bilevel (respectively cut-set and flow) formulation. On the test instances, $\overline \Delta_{time}^B$ is equal to 3.24 (meaning that the solving time is multiplied by 3.24 on average without the valid inequalities), while $\overline \Delta_{time}^C$ and $\overline \Delta_{time}^F$ are equal to 46.78 and 642.94 respectively. Hence, adding these valid inequalities has a huge impact on the solving time, especially on the flow and cut-set formulations. Furthermore, let $\overline \Delta_{CR}^B$ (respectively $\overline \Delta_{CR}^C$) be the mean augmentation of the optimal value of the continuous relaxation when these valid inequalities are added to the bilevel (respectively cut-set) formulation. On the test instances, $\overline \Delta_{CR}^B$ is equal to 1.48 (meaning that the optimal value of the continuous relaxation for the bilevel formulation is multiplied by 1.48 on average with these valid inequalities), while $\overline \Delta_{CR}^C$ is equal to 1.28 (in this case, when $k'=0$, the continuous relaxations of the cut-set and flow formulations yield the same optimal values on this set of instances). The optimal value of the continuous relaxation is then consequently increased when we add these valid inequalities, especially with the bilevel formulation. \\

\pgfplotsset{
    draw group line/.style n args={5}{
        after end axis/.append code={
            \setcounter{groupcount}{0}
            \pgfplotstableforeachcolumnelement{#1}\of\datatable\as\cell{%
                \def\temp{#2}
                \ifx\temp\cell
                    \ifnum\thegroupcount=0
                        \stepcounter{groupcount}
                        \pgfplotstablegetelem{\pgfplotstablerow}{[index]0}\of\datatable
                        \coordinate [yshift=#4] (startgroup) at (axis cs:\pgfplotsretval,0);
                    \else
                        \pgfplotstablegetelem{\pgfplotstablerow}{[index]0}\of\datatable
                        \coordinate [yshift=#4] (endgroup) at (axis cs:\pgfplotsretval,0);
                    \fi
                \else
                    \ifnum\thegroupcount=1
                        \setcounter{groupcount}{0}
                        \draw [
                            shorten >=-#5,
                            shorten <=-#5
                        ] (startgroup) -- node [anchor=north] {#3} (endgroup);
                    \fi
                \fi
            }
            \ifnum\thegroupcount=1
                        \setcounter{groupcount}{0}
                        \draw [
                            shorten >=-#5,
                            shorten <=-#5
                        ] (startgroup) -- node [anchor=north] {#3} (endgroup);
            \fi
        }
    }
}
\makeatother

\pgfplotsset{select coords between index/.style 2 args={
		x filter/.code={
			\ifnum\coordindex<#1\def\pgfmathresult{}\fi
			\ifnum\coordindex>#2\def\pgfmathresult{}\fi
		}
}}

 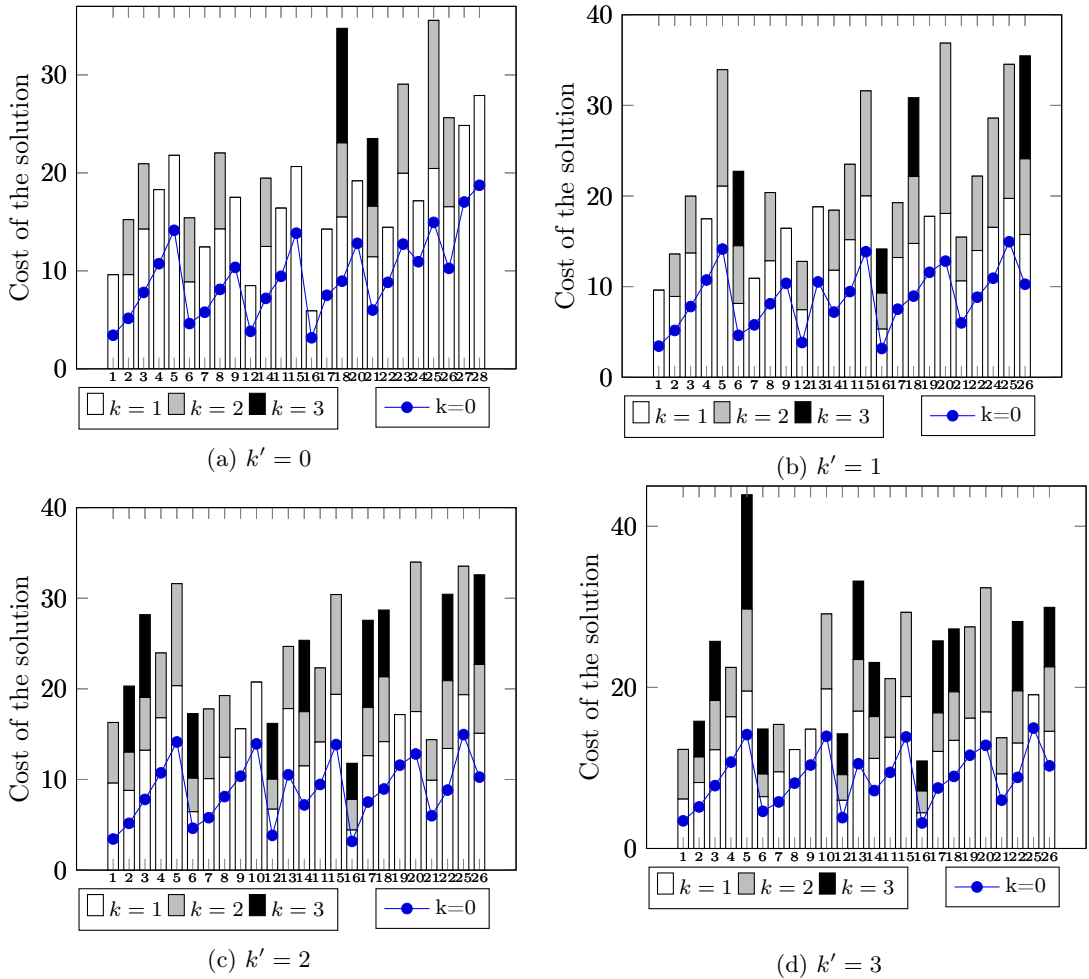
\begin{figure}[h!]
	\begin{subfigure}{0.5\linewidth}
		\begin{center}
\begin{tikzpicture}
\pgfplotstableread{
	1 9.6 0.0 0.0 3.43     
	2 9.61 5.620000000000001 0.0 5.16    
	3 14.26 6.67 0.0 7.8   
	4 18.29 0.0 0.0 10.73    
	5 21.79 0.0 0.0 14.14    
	6 8.86 6.550000000000001 0.0 4.62   
	7 12.43 0.0 0.0 5.78     
	8 14.27 7.760000000000002 0.0 8.11    
	9 17.5 0.0 0.0 10.36    
	10 8.49 0.0 0.0 3.83   
	11 12.49 6.970000000000001 0.0 7.19    
	12 16.4 0.0 0.0 9.45   
	13 20.65 0.0 0.0 13.85    
	14 5.91 0.0 0.0 3.17    
	15 14.25 0.0 0.0 7.51    
	16 15.49 7.5699999999999985 11.7 8.95   
	17 19.18 0.0 0.0 12.81   
	18 11.43 5.16 6.920000000000002 6.0    
	19 14.44 0.0 0.0 8.83    
	20 19.96 9.09 0.0 12.73    
	21 17.14 0.0 0.0 10.93   
	22 20.45 15.120000000000001 0.0 14.95    
	23 16.54 9.09 0.0 10.26     
	24 24.84 0.0 0.0 17.03 
	25 27.9 0.0 0.0 18.74
}\datatable

\begin{axis}[
		width=0.99\textwidth,
ylabel=Cost of the solution,
y label style={at={(axis description cs:0.08,.5)}},
xtick=data,
xticklabels={1,2,3,4,5,6,7,8,9,12,14,11,15,16,17,18,20,21,22,23,24,25,26,27,28},
ymin=0,ymax=37,
ybar stacked,
bar width=4pt,
legend style={
	font=\footnotesize,
	cells={anchor=west},
	legend columns=5,
	at={(0.3,-0.05)},
	anchor=north,
	xticklabel style = {font=\tiny,yshift=0.5ex}
},
]

\addplot[fill=white] table[x index=0,y index=1] \datatable;
\addplot[fill=lightgray] table[x index=0,y index=2] \datatable;
\addplot[fill=black] table[x index=0,y index=3] \datatable;
\legend{$k=1$,$k=2$,$k=3$}
\end{axis}

\begin{axis}[
		width=0.99\textwidth,
xtick=data,
xticklabels={1,2,3,4,5,6,7,8,9,12,14,11,15,16,17,18,20,21,22,23,24,25,26,27,28},
ymin=0,ymax=37,
legend style={
	font=\footnotesize,
	cells={anchor=west},
	legend columns=5,
	at={(0.81,-0.05)},
	anchor=north,
	xticklabel style = {font=\tiny,yshift=0.5ex}
},
]
\addplot table[x index=0,y index=4] \datatable;
\legend{k=0}
\end{axis}
\end{tikzpicture}
			\caption{$k'=0$}
		\end{center}
	\end{subfigure}
	\hfill
	\begin{subfigure}{0.5\linewidth}
		\pgfplotstableread{
			1 9.6 0.0 0.0 3.43  
			2 8.91 4.68 0.0 5.16  
			3 13.7 6.280000000000001 0.0 7.8  
			4 17.47 0.0 0.0 10.73  
			5 21.08 12.86 0.0 14.14  
			6 8.14 6.359999999999999 8.23 4.62  
			7 10.91 0.0 0.0 5.78  
			8 12.84 7.539999999999999 0.0 8.11  
			9 16.43 0.0 0.0 10.36  
			10 7.44 5.339999999999999 0.0 3.83  
			11 18.79 0.0 0.0 10.52  
			12 11.8 6.629999999999999 0.0 7.19  
			13 15.17 8.33 0.0 9.45  
			14 20.0 11.600000000000001 0.0 13.85  
			15 5.31 3.9699999999999998 4.880000000000001 3.17  
			16 13.21 6.050000000000001 0.0 7.51  
			17 14.76 7.389999999999999 8.700000000000003 8.95  
			18 17.75 0.0 0.0 11.58  
			19 18.07 18.799999999999997 0.0 12.81  
			20 10.62 4.840000000000002 0.0 6.0  
			21 13.98 8.21 0.0 8.83  
			22 16.54 12.05 0.0 10.93  
			23 19.73 14.8 0.0 14.95  
			24 15.75 8.350000000000001 11.369999999999997 10.26
		}\datatable

		\makeatletter
		\begin{tikzpicture}
		\begin{axis}[
		width=0.99\textwidth,
		ylabel=Cost of the solution,
		y label style={at={(axis description cs:0.08,.5)}},
		xtick=data,
		xticklabels={1,2,3,4,5,6,7,8,9,12,13,14,11,15,16,17,18,19,20,21,22,24,25,26},
		ymin=0,ymax=40,
		ybar stacked,
		bar width=4pt,
		legend style={
			font=\footnotesize,
			cells={anchor=west},
			legend columns=5,
			at={(0.3,-0.05)},
			anchor=north,
			xticklabel style = {font=\tiny,yshift=0.5ex}
		},
		]
		\addplot[fill=white] table[x index=0,y index=1] \datatable;
		\addplot[fill=lightgray] table[x index=0,y index=2] \datatable;
		\addplot[fill=black] table[x index=0,y index=3] \datatable;
		\legend{$k=1$,$k=2$,$k=3$}
		\end{axis}

		\begin{axis}[
		width=0.99\textwidth,
		xtick=data,
		xticklabels={1,2,3,4,5,6,7,8,9,12,13,14,11,15,16,17,18,19,20,21,22,24,25,26},
		ymin=0,ymax=40,
		legend style={
			font=\footnotesize,
			cells={anchor=west},
			legend columns=5,
			at={(0.81,-0.05)},
			anchor=north,
			xticklabel style = {font=\tiny,yshift=0.5ex}
		},
		]
		\addplot table[x index=0,y index=4] \datatable;
		\legend{k=0}
		\end{axis}
		\end{tikzpicture}
					\caption{$k'=1$}
	\end{subfigure}
	\begin{subfigure}{0.5\linewidth}
		\begin{center}
			\pgfplotstableread{
				1 9.6 6.6899999999999995 0.0 3.43 
				2 8.79 4.220000000000001 7.299999999999999 5.16 
				3 13.22 5.839999999999998 9.130000000000003 7.8 
				4 16.8 7.16 0.0 10.73 
				5 20.34 11.27 0.0 14.14 
				6 6.44 3.7 7.129999999999999 4.62 
				7 10.09 7.690000000000001 0.0 5.78 
				8 12.44 6.820000000000002 0.0 8.11 
				9 15.59 0.0 0.0 10.36 
				10 20.74 0.0 0.0 13.93 
				11 6.72 3.3099999999999996 6.160000000000002 3.83 
				12 17.82 6.859999999999999 0.0 10.52 
				13 11.49 5.99 7.879999999999999 7.19 
				14 14.13 8.179999999999998 0.0 9.45 
				15 19.39 11.02 0.0 13.85 
				16 4.42 3.37 3.999999999999999 3.17 
				17 12.62 5.340000000000002 9.61 7.51 
				18 14.17 7.15 7.379999999999999 8.95 
				19 17.16 0.0 0.0 11.58 
				20 17.49 16.49 0.0 12.81 
				21 9.91 4.48 0.0 6.0 
				22 13.42 7.479999999999999 9.540000000000003 8.83 
				23 19.35 14.18 0.0 14.95 
				24 15.09 7.600000000000001 9.900000000000002 10.26 
			}\datatable

			\makeatletter
			\begin{tikzpicture}
			\begin{axis}[
			width=0.99\textwidth,
			ylabel=Cost of the solution,
			y label style={at={(axis description cs:0.08,.5)}},
			xtick=data,
			xticklabels={1,2,3,4,5,6,7,8,9,10,12,13,14,11,15,16,17,18,19,20,21,22,25,26},
			ymin=0,ymax=40,
			ybar stacked,
			bar width=4pt,
			legend style={
				font=\footnotesize,
				cells={anchor=west},
				legend columns=5,
				at={(0.3,-0.05)},
				anchor=north,
				xticklabel style = {font=\tiny,yshift=0.5ex}
			},
			]
			
			\addplot[fill=white] table[x index=0,y index=1] \datatable;
			\addplot[fill=lightgray] table[x index=0,y index=2] \datatable;
			\addplot[fill=black] table[x index=0,y index=3] \datatable;
			\legend{$k=1$,$k=2$,$k=3$}
			\end{axis}

			\begin{axis}[
			width=0.99\textwidth,
			xtick=data,
			xticklabels={1,2,3,4,5,6,7,8,9,10,12,13,14,11,15,16,17,18,19,20,21,22,25,26},
			ymin=0,ymax=40,
			legend style={
				font=\footnotesize,
				cells={anchor=west},
				legend columns=5,
				at={(0.81,-0.05)},
				anchor=north,
				xticklabel style = {font=\tiny,yshift=0.5ex}
			},
			]
			\addplot table[x index=0,y index=4] \datatable;
			\legend{k=0}
			\end{axis}
			\end{tikzpicture}
			\caption{$k'=2$}
		\end{center}
	\end{subfigure}
	\begin{subfigure}{0.5\linewidth}
		\begin{center}
			\pgfplotstableread{
				1 6.13 6.170000000000001 0.0 3.43 
				2 8.19 3.17 4.440000000000001 5.16 
				3 12.25 6.120000000000001 7.349999999999998 7.8 
				4 16.34 6.140000000000001 0.0 10.73 
				5 19.53 10.189999999999998 14.200000000000003 14.14 
				6 6.43 2.8100000000000005 5.6 4.62 
				7 9.51 5.890000000000001 0.0 5.78 
				8 12.28 0.0 0.0 8.11 
				9 14.8 0.0 0.0 10.36 
				10 19.8 9.32 0.0 13.93 
				11 5.98 3.1799999999999997 5.08 3.83 
				12 17.04 6.420000000000002 9.740000000000002 10.52 
				13 11.18 5.190000000000001 6.73 7.19 
				14 13.8 7.279999999999998 0.0 9.45 
				15 18.86 10.46 0.0 13.85 
				16 4.42 2.71 3.7399999999999993 3.17 
				17 12.05 4.779999999999998 8.950000000000003 7.51 
				18 13.42 6.020000000000001 7.82 8.95 
				19 16.17 11.329999999999998 0.0 11.58 
				20 16.94 15.419999999999998 0.0 12.81 
				21 9.25 4.49 0.0 6.0 
				22 13.08 6.479999999999999 8.620000000000001 8.83 
				23 19.07 0.0 0.0 14.95 
				24 14.54 8 7.4 10.26
			}\datatable

			\makeatletter
			\begin{tikzpicture}
			\begin{axis}[
			width=0.99\textwidth,
			y label style={at={(axis description cs:0.08,.5)}},
			ylabel=Cost of the solution,
			xtick=data,
			xticklabels={1,2,3,4,5,6,7,8,9,10,12,13,14,11,15,16,17,18,19,20,21,22,25,26},
			ymin=0,ymax=45,
			ybar stacked,
			bar width=4pt,
			legend style={
				font=\footnotesize,
				cells={anchor=west},
				legend columns=5,
				at={(0.3,-0.05)},
				anchor=north,
				xticklabel style = {font=\tiny,yshift=0.5ex}
			},
			]
			
			\addplot[fill=white] table[x index=0,y index=1] \datatable;
			\addplot[fill=lightgray] table[x index=0,y index=2] \datatable;
			\addplot[fill=black] table[x index=0,y index=3] \datatable;
			\legend{$k=1$,$k=2$,$k=3$}
			\end{axis}

			\begin{axis}[
			width=0.99\textwidth,
			xtick=data,
			xticklabels={1,2,3,4,5,6,7,8,9,10,12,13,14,11,15,16,17,18,19,20,21,22,25,26},
			ymin=0,ymax=45,
			legend style={
				font=\footnotesize,
				cells={anchor=west},
				legend columns=5,
				at={(0.81,-0.05)},
				anchor=north,
				xticklabel style = {font=\tiny,yshift=0.5ex}
			},
			]
			\addplot table[x index=0,y index=4] \datatable;
			\legend{k=0}
			\end{axis}
			\end{tikzpicture}
		\end{center}
				\caption{$k'=3$}
	\end{subfigure}
	\hfill
	\caption{Cost of the solutions for different instances}
	\label{fig:costRobust}
\end{figure}

Figure \ref{fig:costRobust} deals with the cost of designing failure-resilient networks; the number of the corresponding test instance is displayed on the $x$-axis. Each subfigure shows the cost of an optimal solution for the case where $k$ equals 0 (no arc deleted), 1, 2 and 3. The subfigure (a) corresponds to the case where $k'=0$ (no protection allowed), whereas (b), (c) and (d) correspond to the case where $k'$ is equal to 1, 2 and 3, respectively. This figure shows that designing a network resilient to even a small number of arc-failures can be costly (the cost increases greatly with the value of $k$). However, on subfigure (d), we can see that, by protecting a sufficiently large but still small subset of arcs on the test instances, one can obtain networks that are resilient to 1 or 2 arc deletions while maintaining a cost close to the optimal value of the case with no arc failures.\\

\section{Conclusion}

In this paper, we studied the design of robust networks, i.e., networks that are resilient to arc failures, where one wants to route a uniform flow from a source node to several sink nodes. More precisely, we first focused on the design of a robust arborescence that minimizes the losses if an arc failure occurs. We showed that a restriction of the problem is already hard, and then derived some new mathematical formulations to tackle the problem. Then, we considered the problem of designing a robust network that is able to route the flow, even after the deletion of any $k$ arcs. We derived three mathematical formulations for this problem. We finally tested all the methods that we proposed, and exhibited the associated computational results. The test instances were either randomly generated or obtained from wind power distribution networks, in which we aim to route the energy produced by some wind turbines to a substation that will then deliver this energy to the electrical grid. In a future work, we would like to improve the solving time of our models, in particular of the bilevel one, for instance by solving the associated subproblems using efficient heuristics whenever this is possible.


\bibliographystyle{plain}

\end{document}